\newcommand{\N}{\mathbb N}  
\newcommand{\R}{\mathbb R}  
\renewcommand{\P}{\mathbbm P} 
\newcommand{\cf}{\mathbbm{1}} 
\newcommand{\trp}[1]{#1^{\mathrm{t}}} 
\newcommand{\obsf}{{h}} 
\newcommand{\beq}[1]{\begin{equation}\label{#1}}
\newcommand{\eeq}{\end{equation}}
\newcommand{\beqn}[1]{\begin{equation}\nonumber}
\newcounter{itemfreeze}
\newcommand{\dd}{\mathrm d}
\newcommand{\DD}{\mathrm D}
\newcommand{\abs}[1]{\lvert #1 \rvert}
\newcommand{\norm}[1]{\lVert #1 \rVert}
\newcommand{\var}[1]{\left[ #1 \right]}
\newtheorem{hypothesis}{Hypothesis} 
\newtheorem{theorem}{Theorem}
\newtheorem{lemma}{Lemma}
\theoremstyle{definition}
\newtheorem{definition}{Definition}
\renewcommand{\theenumi}{\alph{enumi}}
\renewcommand{\div}{\operatorname{div}}
\title[Variational Data Assimilation in Continuous Time]{Existence and Uniqueness For Variational Data Assimilation in Continuous Time}
\author{Jochen Br\"{o}cker}
\address{School of Mathematical, Physical, and Computational Sciences, University of Reading, Reading RG6 6AX, United Kingdom}
\email{j.broecker@reading.ac.uk} 
\thanks{The author was supported by the UK Engineering and Physical Sciences Research Council under grant agreement EP/L012669/1.
Fruitful discussions with Horatio Boedihardjo, Tobias Kuna, and Dan Crisan are gratefully acknowledged.
Suggestions by an anonymous referee helped to improve this work.}
\subjclass[2010]{Primary 49J55, 49K35; Secondary 86A10, 93E99, 60G35}
\begin{document}
\maketitle  
\begin{abstract}	
A variant of the optimal control problem is considered which is nonstandard in that the performance index contains ``stochastic'' integrals, that is, integrals against very irregular functions.
The motivation for considering such performance indices comes from dynamical estimation problems where observed time series need to be ``fitted'' with trajectories of dynamical models.
The observations may be contaminated with white noise, which gives rise to the nonstandard performance indices.
Problems of this kind appear in engineering, physics, and the geosciences where this is referred to as data assimilation.
Pathwise existence of minimisers is obtained, along with a maximum principle as well as preliminary results in dynamic programming. 
The results extend previous results on the maximum aposteriori estimator of trajectories of diffusion processes.
To obtain these results, classical concepts from optimal control need to be  substantially modified due to the nonstandard nature of the performance index, as well the fact that typical models in the geosciences do not satisfy linear growth nor monotonicity conditions.
\end{abstract}
%
%
%
%
\section{Introduction and main results}
\label{sec:introduction}
In many branches of science, particularly in physics, the geosciences, and engineering, there appears the problem of ``fitting'' observed time series with trajectories of dynamical models; typically these are ordinary or partial differential equations.
The aim might be to identify appropriate models or to estimate model parameters, but there is also considerable interest in the estimated trajectories themselves.
These problems are the main motivation for the results in this paper.
In the geosciences, these and related problems are referred to as data assimilation, and we will use this term here, although there has been relevant research in other communities which often predates research in the geosciences.
We refer to~\cite{derber89,tremolet06,kalnay01,evensen_enkf_2007} and references for an overview over data assimilation from a geoscientist's perspective; \cite{JAZ,sage68,sontag98} discuss similar problems in engineering; \cite{nakamura_inverse_modeling_2015} contains many examples of data assimilation problems in biology and other applied sciences.
In the geosciences, the estimation of plausible model trajectories which fit the observational record is particularly important, for at least two reasons.
Firstly, trajectories of atmospheric models that fit observations over a long temporal window might hold clues about past weather phenomena which were not directly observed. 
Examples are large and almost stationary pressure systems over the oceans which lead to very persistent weather patterns over the continents (blocking events) but which are not captured by historic instrumentation and thus appear only indirectly in historic weather records.
Secondly, the endpoint of such trajectories might be interpreted as a good guess of the then current state of the atmosphere and hence can be used as initial condition for predictions into the future. 
In the context of data assimilation, ``fitting'' can mean different things but often it involves the optimisation of some sort of error criterion or performance index, integrated over time, in which case data assimilation essentially becomes a variational problem.
If the observations are considered as a stochastic process, the performance index and also the fitted solutions might have a probabilistic interpretation.
At the same time though, observations which contain white noise give rise, as we will see, to nonstandard variational problems, because the performance index will then contain a stochastic integral.
To define the class of problems we want to investigate, we consider an interval $I = [0, T]$ with $T > 0$ and a finite dimensional vector space $E$ with norm $\abs{.}$ and dual $E'$.
Let $f:I\times E \to E$, $g: I\times E \to L(E,E)$ be two functions and consider the controlled initial value problem or {\em state equation}
\beq{equ:initvalproblem}
\dot{x}(t) = f(t, x(t)) + g(t, x(t)) u(t), 
\qquad t \in I,
\eeq 
with initial condition $\xi \in E$ and control function $u: I \to E$.
The function $f$ typically represents the physically relevant part of the model while $g$ might be needed to scale the control; in geophysical applications, $g$ is often the unit matrix.
A {\em process} with respect to $\xi$ is a pair $(x, u)$ so that $u: I \to E$ is measurable, $x: I \to E$ is absolutely continuous, $x(0) = \xi$, and the state equation~\eqref{equ:initvalproblem} is satisfied for almost every $t \in I$.
(At this point, it might not seem obvious why we need the mapping $g$ at all since a new control $\tilde{u}$ could be defined through $\tilde{u}(t) := g(t, x(t)) u(t)$; we will see below though that the performance index contains $u$ which cannot be replaced by $\tilde{u}$ though without further conditions.)
We aim to find controls $u$ so that, roughly speaking, $u$ is not ``too large'' so that $x$ is not ``too far'' from being a solution of the physically relevant part of the model, that is the state equation~\eqref{equ:initvalproblem} but with the control term $g(t, x(t)) u(t)$ omitted.
But on the other hand, we want $x$ to reproduce the observations, in the following sense: for some mapping $\obsf:I \times E \to \R^d$ (which is part of the problem statement), we aim to find controls $u$ so that $\obsf(t, x(t))$ is not ``too far'' from the observation $y(t)$ for all $t \in I$.
This problem might be approached by optimising an appropriate performance index with respect to the control which takes these two aims into account.
In this paper, we assume the observations to have the following structure.
Let $\{W(t), t \in I\}$ be the standard $d$--dimensional Wiener process on some probability space $(\Omega, \mathcal{B}, \P)$.
We can assume without loss of generality that $\Omega = C(I, \R^d)$, the space of continuous functions on $I$ with values in $\R^d$, $\mathcal{B}$ the Borel sigma algebra generated by the supremum norm topology, $\P$ the standard $d$--dimensional Wiener measure, and $W(t): \Omega \to \R^d; W(t)(\omega) = \omega(t)$ for each $t \in I$.
Heuristically, we define our observations as $y(t) = \dot{\zeta}(t) + r(t)$ where $\dot{\zeta}$, the desired signal, is Lebesgue integrable and $r$ is white noise.
In our analysis though we will work with the process $\eta(t) = \zeta(t) + W(t)$ for all $t \in I$, where $\zeta$ is a (probably random) absolutely continuous function.
Formally, $\eta$ can be considered as the observations in ``integrated form'', that is $\eta(t) = $``$\int_0^t y(t) \: \dd s $''.
We stress however that this connection is really only formal as $\eta$ will not have a classical derivative due to the presence of the Wiener process. 
In terms of performance indices, quadratic functionals of the form
\beq{equ:exampleerror}
\frac{1}{2} \int_I 
    \trp{\obsf(t, x(t))} R(t)\obsf(t, x(t)) \: \dd t
- \int_I \trp{\obsf(t, x(t))} R(t) \: \dd \eta(t)
+ \frac{1}{2} \int_I \trp{u(t)}S(t)u(t) \: \dd t
\eeq
have enjoyed popularity, where $R$ and $S$ are suitable functions with values in the nonnegative definite matrices.
The approach is then called weakly constrained 4--dimensional variational assimilation (4D--VAR) in the geosciences, see~\cite{derber89,tremolet06,kalnay01,evensen_enkf_2007}; this is not an exhaustive list of references, and most authors use a discrete time framework.
In the engineering community, the approach is known as minimum energy estimator~\cite{mortensen_likelihood_filtering_1968,hijab_asymptotic_bayesian_1984,krener_minimum_energy_2003}, see also~\cite{rogers_least_action_2013}.
The approach has been interpreted as maximum aposteriori (MAP) estimation of diffusion trajectories (see e.g.~\cite{JAZ}), but this cannot be justified rigorously. 
The correct interpretation of the functional~\eqref{equ:exampleerror} in terms of large deviations has been given by~\cite{hijab_asymptotic_bayesian_1984}.
In~\cite{zeitouni_maximum_1987,zeitouni_existence_1988}, it was shown that the MAP estimator is correctly defined as a minimiser of the Onsager--Machlup functional, which comprises the functional~\eqref{equ:exampleerror} but with further terms added.
A variant of the functional~\eqref{equ:exampleerror} is often found where instead of the first two terms the expression
\beqn{equ:exampleerror2}
\frac{1}{2} \int_I \trp{\big\{y(t) - \obsf(t, x(t))\big\}} 
    R(t) \big\{y(t) - \obsf(t, x(t))\big\} \: \dd t
\eeq
is used.
This form is fine if the function $y$ has the appropriate integrability properties (see e.g.~\cite{krener_minimum_energy_2003} for such a case) but is not well defined if $y$ contains white noise components, as is the case here.
The performance indices we will consider in this paper encompass the minimum energy functional as well as the Onsager--Machlup functional (as in~\cite{zeitouni_existence_1988}); given two functions $\phi:I \times E \times E \to \R$ and $\psi:I \times E \to L(\R^d, \R)$, consider the {\em cost functional}
\beq{equ:costfunctional}
A(x, u) = \int_I \phi(t, x(t), u(t)) \: \dd t + \int_I \psi(t, x(t)) \: \dd \eta(t)
\eeq 
with any process $(x, u)$ so that $t \to \phi(t, x(t), u(t))$ is integrable and $t \to \psi(t, x(t))$ has finite $p$-variation for some $p < 2$ (this ensures that the second integral in~\eqref{equ:costfunctional} is defined as a Young integral, as we will see later).
The functions $\phi$ and $\psi$ will be called the {\em deterministic } and the {\em stochastic running costs}, respectively, and the first and second integral in the cost functional~\eqref{equ:costfunctional} we will call the {\em deterministic } and the {\em stochastic costs}, respectively.
We will properly define the $p$--variation and the Young integral in Section~\ref{sec:existence} and summarise a few properties central to our analysis in Lemmas~\ref{lem:youngproperties}-\ref{lem:varwiener}.
In particular, we obtain that for any given observation path $\eta$, the integral $\int_I \psi(t, x(t)) \: \dd \eta(t)$ is well defined and finite whenever $x$ is a solution of the state equation~\eqref{equ:initvalproblem}.
In particular, attempts to calculate this integral using Stratonovi\v{c} or It\^{o} style partitions will give the same result.
In view of this, we can forget about the stochastic character of the observations and instead approach data assimilation pathwise for each realisation of the observations.
The final ingredient we add to our problem is a {\em control set} $U \subset E$, interpreted as the set of permitted values for our control function $u$.
A control function $u:I \to E$ will be called {\em feasible} with respect to $\xi$ if $u$ is measurable, $u(t) \in U$ for almost all $t \in I$ and there is a function $x$ so that $(x, u)$ is a process with respect to $\xi$.
In this case, the pair $(x, u)$ will be called a {\em feasible process} with respect to $\xi$.
A feasible process $(x, u)$ with respect to $\xi$ will be called an {\em admissible process} with respect to $\xi$ if it has finite costs; a control $u$ which is part of an admissible process will be called an {\em admissible control} with respect to $\xi$.
The qualifier ``with respect to $\xi$'' might be omitted if clear from the context.
We are now ready to formulate our problem statement
\par \vspace{1ex} \noindent \textbf{Problem VAR} (Variational Data Assimilation)
%
%
\itshape
Minimise the cost functional
\beqn{equ:objectivefunction}
A(x, u) 
= \int_I \phi(t, x(t), u(t)) \: \dd t 
+ \int_I \psi(t, x(t)) \: \dd \eta(t)
\eeq
subject to 
\begin{align}
\label{equ:constraintode}
\dot{x}(t) & = f(t, x(t)) + g(t, x(t))u(t) 
\qquad \mbox{for a.a.\ }t \in I\\
\label{equ:constraintu}
u(t) & \in U  
\qquad \mbox{for a.a.\ }t \in I\\
\label{equ:constraintx}
x(0) & = \xi,
\end{align}
that is, over all admissible pairs with respect to $\xi$. 
\vspace{1ex}
\normalfont
The remainder of this section will be devoted to presenting our main hypotheses and results.
For functions on $I$, we use the norms
\beqn{equ:normdef}
\norm{u}_r := \left( \int_I |u(s)|^r \; \dd s \right)^{1/r}
\qquad \mbox{and} \qquad
\norm{u}_{\infty} := \sup_{s \in I} |u(s)|
\eeq 
For $r > 1$, we define $\mathcal{U}_r$ as the set of all measurable control functions $u:I \to E$ with $u(t) \in U$ almost surely and $\norm{u}_r < \infty$.
\begin{hypothesis}
\label{hyp:hypothesisI}
\begin{enumerate}
\item \label{hyp:thexcont} $f, g$ are continuous. 
\item \label{hyp:thexhoel} For all $R \geq 0$ the function $\psi$ is H\"{o}lder on $I \times \{x \in E; \abs{x} \leq R\}$ with constant $K_R$ and exponent $\kappa > \frac{1}{2}$.
Further
\beq{hyp:thexhder}
    K_R \leq C_{\psi} (1 + R^{\alpha})
\eeq
for some nonegative constants $C_{\psi}, \alpha$.
\item \label{hyp:thexcontrolset} $U$ is closed and convex.
\item \label{hyp:thexconvexity} The function $\phi$ is continuous on $I \times E \times U$, convex in $u$ and satisfies a lower bound of the form $\phi(t, x, u) \geq C_{\phi} (\abs{u}^r - \abs{x}^{\delta})$ for constants $C_{\phi} > 0$, $\delta > 0$ and $r > 1$.
\item \label{hyp:thexvarbounds} There exists $\xi \in E$  and nonegative constants $C_s, C_e, \beta, \gamma$ so that for any control $u \in \mathcal{U}_r$ and any local solution $x$ for the state equation~\eqref{equ:initvalproblem} with control $u$ and initial condition $\xi$ we have
\begin{align}
\norm{\dot{x}}_{r} 
    & \leq C_s (1 + \norm{u}_r^{\beta})
        \label{hyp:thexnonlin}\\
\norm{x}_{\infty} 
    & \leq C_e (1 + \norm{u}_r^{\gamma}).
        \label{hyp:thexenergy}
\end{align}
Further, $\gamma (\alpha + \kappa - \frac{1}{2}) + \frac{\beta}{2} < r$, and $\gamma \delta < r$.
\end{enumerate}
\end{hypothesis}
Hypothesis~\ref{hyp:hypothesisI}\eqref{hyp:thexhoel} represents growth and regularity conditions on the stochastic costs; the estimate~(\ref{hyp:thexnonlin}) quantifies the nonlinearity in the state equation, while estimate~(\ref{hyp:thexenergy}) might represent an energy estimate.
By local solution in Hypothesis~\ref{hyp:hypothesisI}\eqref{hyp:thexvarbounds} we mean that $x$ is a solution of the state equation~\eqref{equ:initvalproblem} but possibly only over a  smaller interval $[0, T_1]$ with $T_1 \leq  T$.
Note that we can assume without loss of generality that $\gamma \leq \beta$, since the estimate~(\ref{hyp:thexnonlin}) always implies an energy estimate~(\ref{hyp:thexenergy}) with $\gamma = \beta$.
We stress that these conditions might be satisfied even if $f$ fails to exhibit linear growth. 
To illustrate this, we present a simple example.
This example belongs to a wider class of problems, to be discussed in Section~\ref{sec:motivatingexample} and motivated by data assimilation in geophysical fluid dynamics, in which the state equation has a characterisic quadratic nonlinearity, so $\beta = 2$, but $\gamma = 1$ due to energy conservation, and this will turn out to be crucial.
Our example is the Lorenz'63 system in the form given in~\cite{temam_infinite_dynamics_1997}.
The vector field has the form $f = f_1 + f_2$, where
\beq{equ:lorenz}
f_1(x, y, z) 
 = \left( 
\begin{array}{c}
-\sigma x + \sigma y\\
-\sigma x - y \\
-bz - b(r + \sigma) 
\end{array}
\right)
\qquad \mbox{and} \qquad
f_2(x, y, z) 
 = \left( 
\begin{array}{c}
0\\
-xz\\
xy 
\end{array}
\right),
\eeq 
with $\sigma, r, b$ positive parameters (see~\cite{temam_infinite_dynamics_1997} for their interpretation).
Typical values for these parameters are $\sigma = 10, r = 28$, and $b = 8/3$.
Note that $f_2$ is quadratic while $f_1$ is linear and stable. 
We complement this by setting $g = \cf$ and putting no further constraints on the control, that is we set $U = \R^3$.
Further, we consider a cost functional of the form~\eqref{equ:exampleerror} with $h(x, y, z) = x, R = 1$, and $S = \cf$ so that $\phi(x, y, z, u) = \frac{1}{2} x^2 + \frac{1}{2} u^2$ and $\psi(x, y, z) = -x$.
With regards to Hypothesis~\ref{hyp:hypothesisI}, only item~\eqref{hyp:thexvarbounds} is not obvious.
However, since $(x, y, z)^t f_2(x, y, z) = 0$ and $f_1$ is stable, the energy estimate~\eqref{hyp:thexenergy} in Hypothesis~\ref{hyp:hypothesisI}\eqref{hyp:thexvarbounds} follows with $\gamma = 1$ from an application of the Bellmann--Gr\"{o}nwall Lemma (see \cite{temam_infinite_dynamics_1997} and Sec.~\ref{sec:motivatingexample} for more details).
Using this bound on $\|x\|_{\infty}$ directly in the state equation, we obtain the nonlinearity estimate~\eqref{hyp:thexnonlin} in Hypothesis~\ref{hyp:hypothesisI}\eqref{hyp:thexvarbounds} with $\beta = 2$, due to $f$ being quadratic in leading order.
As another note on Hypothesis~\ref{hyp:hypothesisI}, it might seem at first sight that Hypothesis~\ref{hyp:hypothesisI}\eqref{hyp:thexhoel} as well as the lower bound on~$r$ in Hypothesis~\ref{hyp:hypothesisI}\eqref{hyp:thexvarbounds} both become {\em less} restrictive for smaller $\kappa$.
It has to be kept in mind though that a smaller H\"{o}lder exponent $\kappa$ in general implies a larger H\"{o}lder constant $K_R$ and, in our context, a larger $\alpha$ in the estimate~\eqref{hyp:thexhder}. 
Our first result ensures that under Hypothesis~\ref{hyp:hypothesisI}, Problem~VAR is well defined.
\begin{lemma}
Under Hypothesis~\ref{hyp:hypothesisI}, the following holds:
\label{lem:welldefined}
\begin{enumerate}
\item For any $u \in \mathcal{U}_r$, the state equation has a (not necessarily unique) solution $x$ with $x(0) = \xi$.
\item If $(x, u)$ is feasible, the stochastic part of the cost functional is well defined and finite.
\item If $(x, u)$ is feasible, the function $t \to \phi(t, x(t), u(t))$ is measurable and bounded below by an integrable function; hence the deterministic costs are well defined as an element of $\R \cup \{\infty\}$.
\item If $(x, u)$ is admissible, then $\norm{u}_r < \infty$.
\end{enumerate}
\end{lemma}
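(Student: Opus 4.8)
The plan is to dispatch the four claims in order, drawing throughout on the a priori estimates built into Hypothesis~\ref{hyp:hypothesisI}\eqref{hyp:thexvarbounds}.

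\emph{Claim (a).} I would begin with a Carath\'eodory-type local existence theorem. For $u \in \mathcal{U}_r$ and $x$ confined to a bounded ball, the right-hand side $f(t,x) + g(t,x)u(t)$ is continuous in $x$, measurable in $t$, and dominated by some $M\,(1+\abs{u(t)})$, which lies in $L^1(I)$ because $r > 1$; hence a solution exists on some $[0,T_1]$. To reach all of $I$, take a maximal interval of existence $[0,T^\ast)$; applying the energy estimate~\eqref{hyp:thexenergy} to the restriction of the solution to any $[0,T_1]$ with $T_1 < T^\ast$ gives $\norm{x}_\infty \le C_e(1+\norm{u}_r^\gamma)$, a bound independent of $T_1$. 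So $x$ stays in a fixed compact set, extends continuously to $T^\ast$, and (unless $T^\ast = T$) can be continued past $T^\ast$, contradicting maximality. Thus a solution exists on all of $I$, with no uniqueness claimed.

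\emph{Claim (b).} Since $(x,u)$ is feasible, $x$ is absolutely continuous on $I$, hence bounded, $\abs{x(t)} \le R$ say, and of finite $1$-variation; so $t \mapsto (t,x(t))$ has finite $1$-variation. By Hypothesis~\ref{hyp:hypothesisI}\eqref{hyp:thexhoel}, $\psi$ is $\kappa$-H\"older on $I \times \{\abs{x}\le R\}$, and the elementary fact that a $\kappa$-H\"older function composed with a path of finite $1$-variation has finite $1/\kappa$-variation shows that $t \mapsto \psi(t,x(t))$ has finite $p$-variation with $p := 1/\kappa < 2$. On the other side, $\eta = \zeta + W$ with $\zeta$ absolutely continuous and $W$ of finite $q$-variation for every $q > 2$ by Lemma~\ref{lem:varwiener}; because $\kappa > \tfrac12$ one may fix $q > 2$ close enough to $2$ that $\tfrac1p + \tfrac1q = \kappa + \tfrac1q > 1$. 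Lemma~\ref{lem:youngproperties} then yields that the Young integral $\int_I \psi(t,x(t))\,\dd\eta(t)$ exists and is finite.

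\emph{Claims (c) and (d).} Continuity of $\phi$ on $I \times E \times U$ together with continuity of $x$ and measurability of $u$ makes $t \mapsto \phi(t,x(t),u(t))$ measurable, and the lower bound of Hypothesis~\ref{hyp:hypothesisI}\eqref{hyp:thexconvexity} gives $\phi(t,x(t),u(t)) \ge C_\phi(\abs{u(t)}^r - \abs{x(t)}^\delta) \ge -C_\phi\norm{x}_\infty^\delta$, a constant, hence integrable over $I$; so the deterministic cost is well defined in $\R\cup\{\infty\}$, proving (c). For (d), if $(x,u)$ is admissible then $A(x,u)$ is finite, and since by (b) its stochastic part is finite the deterministic part $\int_I \phi(t,x(t),u(t))\,\dd t$ is finite; combining this with the same lower bound and the boundedness of $x$ gives $C_\phi \norm{u}_r^r \le \int_I \phi(t,x(t),u(t))\,\dd t + C_\phi T\,\norm{x}_\infty^\delta < \infty$, whence $\norm{u}_r < \infty$. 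I expect the only genuinely delicate step to be the variation accounting in (b): one must balance the H\"older exponent $\kappa$ of $\psi$ against the fact that the Wiener path has finite $q$-variation only for $q$ strictly above $2$, and it is exactly the hypothesis $\kappa > \tfrac12$ that leaves enough room for the Young pairing condition $\tfrac1p + \tfrac1q > 1$; the remaining parts are routine once the Carath\'eodory machinery and the a priori bounds of Hypothesis~\ref{hyp:hypothesisI} are invoked.
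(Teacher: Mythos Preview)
Your proposal is correct and follows essentially the same route as the paper's proof, which is very terse (it merely points to standard ODE existence for (a), defers the $p$--variation estimate for $\psi(.,x(.))$ to the proof of Theorem~\ref{thm:existence} for (b), and invokes the lower bound on $\phi$ together with boundedness of $x$ for (c) and (d)). Your write-up supplies the details the paper omits---the Carath\'eodory local existence plus continuation via the energy estimate~\eqref{hyp:thexenergy}, and the explicit Young pairing $\kappa + 1/q > 1$---but the underlying argument is the same.
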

In view of Lemma~\ref{lem:welldefined}(a), every element $u \in \mathcal{U}_r$ is feasible with respect to $\xi$, and by item~(d), every admissible $u$ must lie in $\mathcal{U}_r$.
By items~(b, c), a feasible process is admissible if and only if the deterministic costs are finite. 
In particular, admissibility does not depend on the particular observation path.
%
The following is our main existence result:
\begin{theorem}
\label{thm:existence}
Suppose that Hypothesis~\ref{hyp:hypothesisI} is in force and that there is an admissible process $(x_0, u_0)$ with repect to $\xi$ specified in Hypothesis~\ref{hyp:hypothesisI}.
Then for any observation path $\{\eta_t, t \in I\}$, Problem~VAR admits a solution $(x_*, u_*)$.
Further, $\norm{u_*}_r < \infty$. 
\end{theorem}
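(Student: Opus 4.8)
The plan is to run the direct method of the calculus of variations, adapted to the Young-integral term and to the lack of linear growth. Fix an observation path $\eta = \zeta + W$ and put $m := \inf A(x,u)$ over admissible processes with respect to $\xi$. Since $(x_0,u_0)$ is admissible, $m < \infty$, and by Lemma~\ref{lem:welldefined}(a) every $u \in \mathcal U_r$ is feasible, so the admissible set is nonempty. Choose a minimising sequence $(x_n,u_n)$ of admissible processes with $A(x_n,u_n) \to m$, so $A(x_n,u_n) \le m+1$ for large $n$. The first and decisive task is to turn this into an a priori bound $\sup_n \norm{u_n}_r < \infty$.

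For the deterministic cost, Hypothesis~\ref{hyp:hypothesisI}\eqref{hyp:thexconvexity} gives $\int_I \phi(t,x_n,u_n)\,\dd t \ge C_\phi\norm{u_n}_r^r - C_\phi\int_I \abs{x_n(t)}^\delta\,\dd t$, and~\eqref{hyp:thexenergy} bounds the last term by a constant times $1 + \norm{u_n}_r^{\gamma\delta}$ with $\gamma\delta < r$. For the stochastic cost I use that $x_n$, being absolutely continuous with $\dot x_n \in L^r(I) \subset L^1(I)$, is of bounded variation with $\norm{\dot x_n}_1 \le T^{1-1/r}\norm{\dot x_n}_r$; hence, by Hypothesis~\ref{hyp:hypothesisI}\eqref{hyp:thexhoel}, the path $t \mapsto \psi(t,x_n(t))$ has finite $1/\kappa$-variation controlled by a constant times $K_{\norm{x_n}_\infty}(1+\norm{\dot x_n}_1)^\kappa$, while its supremum norm is bounded by a constant times $1 + \norm{x_n}_\infty^{\alpha+\kappa}$. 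Interpolating the $1/\kappa$-variation against the supremum norm up to $q$-variation with $q$ slightly below $2$, applying the Young estimate of Lemmas~\ref{lem:youngproperties}--\ref{lem:varwiener} against $\eta$ (whose $\zeta$-part is of bounded variation and whose $W$-part has finite $p$-variation for every $p>2$ almost surely), and inserting~\eqref{hyp:thexnonlin} and~\eqref{hyp:thexenergy}, one obtains $\abs{\int_I \psi(t,x_n(t))\,\dd\eta(t)} \le C(\eta)(1 + \norm{u_n}_r^{\mu})$ with $\mu$ arbitrarily close from above to $\gamma(\alpha+\kappa-\tfrac12) + \tfrac\beta2$. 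Choosing the interpolation parameter so that $\mu < r$, which is possible precisely because of the strict inequality in Hypothesis~\ref{hyp:hypothesisI}\eqref{hyp:thexvarbounds}, and combining with the deterministic lower bound, shows that $C_\phi\norm{u_n}_r^r$ is dominated by $A(x_n,u_n) + C(1 + \norm{u_n}_r^{\mu} + \norm{u_n}_r^{\gamma\delta})$ with both competing exponents strictly below $r$; hence $\sup_n\norm{u_n}_r < \infty$. Then~\eqref{hyp:thexnonlin} bounds $\sup_n\norm{\dot x_n}_r$ and~\eqref{hyp:thexenergy} bounds $\sup_n\norm{x_n}_\infty$.

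Next, compactness and identification of the limit. Since $r>1$, $L^r(I,E)$ is reflexive, so along a subsequence $u_n \rightharpoonup u_*$ weakly in $L^r$; the set of $v \in L^r$ with $v(t) \in U$ a.e.\ is convex (Hypothesis~\ref{hyp:hypothesisI}\eqref{hyp:thexcontrolset}) and strongly closed, hence weakly closed, so $u_*(t) \in U$ a.e. The $x_n$ are uniformly bounded and, being uniformly $(1-1/r)$-H\"older, equicontinuous, so by Arzel\`a--Ascoli a further subsequence converges uniformly to some $x_*$, and, refining once more, $\dot x_n \rightharpoonup \dot x_*$ in $L^r$. Continuity of $f,g$ and uniform convergence give $f(\cdot,x_n) \to f(\cdot,x_*)$ uniformly and $g(\cdot,x_n)u_n \rightharpoonup g(\cdot,x_*)u_*$ weakly in $L^r$: splitting $g(\cdot,x_n)u_n - g(\cdot,x_*)u_* = (g(\cdot,x_n)-g(\cdot,x_*))u_n + g(\cdot,x_*)(u_n-u_*)$, the first summand tends to $0$ strongly because $\norm{g(\cdot,x_n)-g(\cdot,x_*)}_\infty \to 0$ while $\norm{u_n}_r$ is bounded, and the second tends to $0$ weakly. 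Passing to the limit in the state equation yields $\dot x_* = f(\cdot,x_*) + g(\cdot,x_*)u_*$ with $x_*(0)=\xi$, so $(x_*,u_*)$ is feasible.

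Finally, lower semicontinuity of $A$ and the conclusion, together with a word on where the difficulty lies. The uniform bound on $\norm{\psi(\cdot,x_n)}_{1/\kappa\text{-var}}$ and the uniform convergence $\psi(\cdot,x_n) \to \psi(\cdot,x_*)$ (from uniform continuity of $\psi$ on $I\times\{\abs x \le \sup_n\norm{x_n}_\infty\}$) upgrade, by the same interpolation, to convergence in $q$-variation for some $q<2$; continuity of the Young integral in its integrand (Lemmas~\ref{lem:youngproperties}--\ref{lem:varwiener}) then gives $\int_I \psi(t,x_n(t))\,\dd\eta(t) \to \int_I \psi(t,x_*(t))\,\dd\eta(t)$, so the stochastic cost is continuous along the sequence. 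For the deterministic cost, $\phi$ is continuous, convex in $u$, and bounded below along the sequence by the constant $-C_\phi\sup_n\norm{x_n}_\infty^\delta$; a classical weak lower semicontinuity theorem for integral functionals with integrand convex in the control then gives $\liminf_n\int_I\phi(t,x_n,u_n)\,\dd t \ge \int_I\phi(t,x_*,u_*)\,\dd t$. Adding the two contributions yields $A(x_*,u_*) \le \liminf_n A(x_n,u_n) = m$; in particular the deterministic costs of $(x_*,u_*)$ are finite, so by Lemma~\ref{lem:welldefined} the pair is admissible, hence $A(x_*,u_*)=m$ and $(x_*,u_*)$ solves Problem~VAR, while $\norm{u_*}_r \le \liminf_n\norm{u_n}_r < \infty$. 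The main obstacle is the a priori bound of the second paragraph: the stochastic cost has no sign and grows, through $\psi(\cdot,x_n)$, with both $\norm{x_n}_\infty$ and $\norm{\dot x_n}_r$, which the state equation controls only superlinearly in $\norm{u_n}_r$ because $f$ need not have linear growth, and the precise exponent bookkeeping — finite $1/\kappa$-variation of $\psi(\cdot,x_n)$ from $x_n$ being of bounded variation, interpolated to $q$-variation with $q$ just below $2$ against the $p$-variation of the Wiener part of $\eta$ — is exactly what makes the critical exponent $\gamma(\alpha+\kappa-\tfrac12)+\tfrac\beta2$, so that the strict inequality in Hypothesis~\ref{hyp:hypothesisI}\eqref{hyp:thexvarbounds} is what rescues coercivity; the only other delicate point, continuity of the Young integral along the minimising sequence, rests on the uniform $1/\kappa$-variation bound established in that same step.
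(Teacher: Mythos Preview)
Your proposal is correct and follows essentially the same route as the paper's proof: the coercivity step via the interpolation of the $1/\kappa$-variation of $\psi(\cdot,x_n)$ against its sup norm to reach $q$-variation with $q<2$, yielding the critical exponent $\gamma(\alpha+\kappa-\tfrac12)+\tfrac\beta2$, is exactly the content of the paper's Lemma~\ref{lem:detailedestimate}; the extraction via reflexivity and Arzel\`a--Ascoli, the continuity of the stochastic cost via uniform convergence plus a uniform $p$-variation bound (the paper packages this as Lemma~\ref{lem:youngcontinuity}), and the weak lower semicontinuity of the deterministic cost all match the paper's argument. The only cosmetic difference is that you spell out the feasibility argument for $(x_*,u_*)$ via the splitting $g(\cdot,x_n)u_n - g(\cdot,x_*)u_* = (g(\cdot,x_n)-g(\cdot,x_*))u_n + g(\cdot,x_*)(u_n-u_*)$, whereas the paper simply cites Clarke, Theorem~23.11.
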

Both Lemma~\ref{lem:welldefined} and Theorem~\ref{thm:existence} will be proved in Section~\ref{sec:existence}.
In Section~\ref{sec:maximumprinciple}, we will prove a maximum principle (Theorem~\ref{thm:maximumprinciple}).
For the maximum principle, we strengthen our Hypothesis:
\begin{hypothesis}
\label{hyp:hypothesisII}
Hypothesis~\ref{hyp:hypothesisI} and
\begin{enumerate}
\item \label{hyp:thmaxstate} $f, g$ have continuous partial derivatives with respect to the second variable. 
\item \label{hyp:thmaxdetcost} $\phi$ has a continuous partial derivative with respect to the second variable. 
\item \label{hyp:thmaxinteg} 
There exists constants $R > 0$, $c \geq  0$ and an integrable function $d: I \to \R_{\geq 0}$ so that whenever $(x, u)$ is an admissible process and $\abs{y - x(t)} \leq R$, then $\abs{\DD_2 \phi(t, y, u(t))} \leq c \abs{\phi(t, y, u(t))} + d(t)$ almost surely. 
\item \label{hyp:thmaxhoel} $\psi$ has a partial derivative with respect to the second variable which is locally H\"{o}lder in $(t, x)$ with exponent larger than $\frac{1}{2}$. 
\end{enumerate}
\end{hypothesis}
\begin{theorem}[Maximum principle]
\label{thm:maximumprinciple}
Assume Hypothesis~\ref{hyp:hypothesisII} for a certain $r > 1$, and initial condition $\xi$.
Fix $\eta$ and let $(x, u)$ be a global minimiser for Problem~VAR.
Then for almost all $t \in I$ the control $u$ satisfies the condition
\beq{equ:maxcondition}
H(t, x(t), \lambda(t), u(t)) 
= \inf_{v \in U} H(t, x(t), \lambda(t), v)
\eeq
where the function $\lambda: I \to E'$ is the unique solution to the integral equation
\begin{multline}
\label{equ:costate}
\lambda(t) = \int_{t}^{T} 
    \lambda(s) \big\{
        \DD_2 f(s, x(s)) + \DD_2 g(s, x(s)) u(s)
    \big\} \: \dd s\\
    + \int_{t}^{T} \DD_2 \phi(s, x(s), u(s)) \: \dd s
    + \int_{t}^{T} \DD_2 \psi(s, x(s)) \: \dd \eta_s
\end{multline}
and 
\beqn{equ:hamiltonian}
H(t, x, \lambda, v) = \phi(t, x, \lambda, v) + \lambda \big( f(t, x) + g(t, x) v \big)
\eeq
\end{theorem}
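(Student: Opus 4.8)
The plan is to follow the classical Pontryagin programme, adapted to the Young-integral setting. First I would fix the minimiser $(x,u)$ and, for each $v \in U$ and each Lebesgue point $\tau \in (0,T)$ of the relevant integrands, construct a needle (spike) variation $u_\varepsilon$ which equals $v$ on $[\tau-\varepsilon,\tau]$ and $u$ elsewhere; since $U$ is convex one could alternatively use convex combinations, but needle variations are cleaner for the pointwise minimum condition~\eqref{equ:maxcondition}. I would let $x_\varepsilon$ be the corresponding solution of the state equation~\eqref{equ:constraintode} with the same initial condition $\xi$; using Hypothesis~\ref{hyp:hypothesisII}\eqref{hyp:thmaxstate} (continuous differentiability of $f,g$ in $x$) together with the a priori bounds from Hypothesis~\ref{hyp:hypothesisI}\eqref{hyp:thexvarbounds}, standard Gr\"onwall arguments give that $x_\varepsilon$ exists on all of $I$ for $\varepsilon$ small, that $x_\varepsilon \to x$ uniformly, and that the variation $z := \lim_{\varepsilon \to 0} \varepsilon^{-1}(x_\varepsilon - x)$ exists, solving the linearised equation $\dot z(t) = \{\DD_2 f + \DD_2 g\, u\} z(t)$ on $(\tau,T]$ with jump initial condition $z(\tau^+) = g(\tau,x(\tau))(v - u(\tau))$ and $z \equiv 0$ on $[0,\tau]$.

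Next I would establish the costate equation~\eqref{equ:costate}. The right-hand side is well posed: the first two integrals are ordinary Lebesgue integrals (using Hypothesis~\ref{hyp:hypothesisII}\eqref{hyp:thmaxdetcost},\eqref{hyp:thmaxinteg} to control $\DD_2\phi$ along the admissible trajectory), and the third is a Young integral against $\eta$ because, by Hypothesis~\ref{hyp:hypothesisII}\eqref{hyp:thmaxhoel}, $t \mapsto \DD_2\psi(t,x(t))$ is H\"older of exponent $>\tfrac12$ while $\eta = \zeta + W$ has finite $p$-variation for some $p<2$ (Lemmas~\ref{lem:youngproperties}--\ref{lem:varwiener}); the complementary-exponent condition $\kappa + 1/p > 1$ makes the Young integral converge. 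Existence and uniqueness of $\lambda$ then follow by rewriting~\eqref{equ:costate} as a linear backward integral equation and iterating, the Young integral being treated as an inhomogeneous term. The crucial computation is then to differentiate $A(x_\varepsilon,u_\varepsilon)$ at $\varepsilon = 0$: the deterministic cost contributes $\int_\tau^T \DD_2\phi(t,x(t),u(t)) z(t)\,\dd t$ plus the needle term $\phi(\tau,x(\tau),v) - \phi(\tau,x(\tau),u(\tau))$, while the stochastic cost contributes $\int_\tau^T \DD_2\psi(t,x(t)) z(t)\,\dd\eta_t$. Using the product/integration-by-parts rule for Young integrals (valid since $z$ is absolutely continuous and $\DD_2\psi(\cdot,x(\cdot))$ is H\"older of exponent $>\tfrac12$) to substitute the definition of $\lambda$, the $z$-terms collapse, leaving $\tfrac{\dd}{\dd\varepsilon}\big|_{0} A(x_\varepsilon,u_\varepsilon) = H(\tau,x(\tau),\lambda(\tau),v) - H(\tau,x(\tau),\lambda(\tau),u(\tau))$. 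Nonnegativity of this derivative for all $v \in U$ at every Lebesgue point $\tau$ yields~\eqref{equ:maxcondition}.

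The main obstacle, and the place where the nonstandard nature of the problem bites, is the differentiation of the stochastic cost and the manipulation of $\lambda$: one must justify interchanging the limit $\varepsilon \to 0$ with the Young integral $\int \psi(t,x_\varepsilon(t))\,\dd\eta_t$, and justify the Young integration-by-parts that couples $z$ with $\lambda$. This requires uniform (in $\varepsilon$) $p$-variation bounds on $t\mapsto \psi(t,x_\varepsilon(t))$ and $t\mapsto \DD_2\psi(t,x_\varepsilon(t))$, which come from composing the H\"older bound in Hypothesis~\ref{hyp:hypothesisI}\eqref{hyp:thexhoel} (and~\ref{hyp:hypothesisII}\eqref{hyp:thmaxhoel}) with the uniform $\|x_\varepsilon\|_\infty$ and $\|\dot x_\varepsilon\|_r$ bounds, together with continuity of the Young integral in the variation norm — precisely the estimates recorded in Lemmas~\ref{lem:youngproperties}--\ref{lem:varwiener}. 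A secondary technical point is that $f$ need not have linear growth, so every Gr\"onwall step must be run through the a priori energy and nonlinearity estimates~\eqref{hyp:thexnonlin}--\eqref{hyp:thexenergy} rather than a naive global Lipschitz bound; since the minimiser is admissible, $\|u\|_r < \infty$ by Lemma~\ref{lem:welldefined}(d), so these estimates apply along $(x,u)$ and, after an elementary perturbation argument, along $(x_\varepsilon,u_\varepsilon)$ as well. Finally, measurability in $t$ of the map $t \mapsto \inf_{v\in U} H(t,x(t),\lambda(t),v)$, needed to make sense of~\eqref{equ:maxcondition} as an a.e.\ statement, follows from continuity of $\phi,f,g$, separability of $U$, and a standard measurable-selection argument.
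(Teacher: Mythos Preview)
Your proposal is correct and follows essentially the same route as the paper: needle variation at a Lebesgue point, linearised state equation, costate as a linear backward equation with the Young integral $\int \DD_2\psi\,\dd\eta$ treated as an inhomogeneity, and a duality/integration-by-parts argument (the paper packages this as a separate Duality Lemma) to collapse the first-order cost expansion into the Hamiltonian difference, with the stochastic-cost limit justified exactly as you say via uniform $p$-variation bounds and the continuity result of Lemma~\ref{lem:youngcontinuity}. One slip worth fixing: $\eta$ has finite $p$-variation for $p>2$, not $p<2$; it is $t\mapsto\DD_2\psi(t,x(t))$ that has finite $q$-variation with $q<2$, and the Young condition you need is $1/p+1/q>1$.
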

The function $\lambda$ is referred to as the {\em costate}, and $E'$ denotes the dual space of $E$.
The third integral in Equation~\eqref{equ:costate} is to be interpreted as the element of $E'$ given by the linear function
\[
E \to \R; z \to \int_{t}^{T} \DD_2 \psi(s, x(s))z \: \dd \eta_s.
\]
If $(x, u)$ is an optimal pair for problem~VAR with respect to initial condition~$\xi$ and if $\lambda$ is a corresponding costate (in the sense of Theorem~\ref{thm:maximumprinciple}), then we will refer to $(x, u, \lambda)$ as an {\em optimal triple} with respect to initial condition~$\xi$.
Note that Hypothesis~\ref{hyp:hypothesisII}\eqref{hyp:thmaxstate} implies uniqueness to solutions of the state equation.
The condition~\eqref{hyp:thmaxinteg} in Hypothesis~\ref{hyp:hypothesisII} ensures that the second integral in the definition of $\lambda$ is well defined.
In fact, for the maximum principle it would be sufficient to know that this condition holds along optimal processes $(x, u)$.
In Sections~\ref{sec:regularitycontrols} and \ref{sec:uniqueness}, two applications of the maximum principle will be discussed. 
A precise formulation of the results will be given in those sections. 
The first result, Theorem~\ref{thm:controlregularity}, shows that under the conditions of the maximum principle and if $\phi(t, x, u)$ is strongly convex in $u$, an optimal control has finite $p$--variation for any $p > 2$.
It turns out that the regularity of the controls is limited by the regularity of the observations, which cannot be improved beyond $p$--variation with $p > 2$.
In particular, we do not expect optimal controls to be Lipschitz in general, as is the case in other optimal control problems.
The second application (in Sec.~\ref{sec:uniqueness}) concerns the value function and its relation to uniqueness of optimal controls.
The costs of any feasible control can be regarded as a function $J(\xi, u)$ on $E \times \mathcal{U}_r$, and the {\em value function} is defined as 
\[
V(\xi) = \inf_{u \in \mathcal{U}_r} J(\xi, u)
\]
Theorem~\ref{thm:uniqueness} then shows that an optimal control $u$ is unique for Problem~VAR if and only if the value function has a derivative at $\xi$.
Finally, section~\ref{sec:motivatingexample} discusses an application to a typical problem in geophysical data assimilation.
For this example, both a minimum energy cost function (known as weakly constrained 4d-VAR costfunction in the atmospheric sciences) as well as an Onsager--Machlup type cost function will be considered.
For the latter case, we obtain the existence of minimisers under substantially weaker assumptions than in~\cite{zeitouni_existence_1988}.
It is worth mentioning that Problem~VAR can be approached using classical methods of optimal control.
Under Hypothesis~\ref{hyp:hypothesisI} and if $\psi$ has a continuous derivative with respect to $(t, x)$, the stochastic costs can be integrated by parts and one obtains a standard control problem with running costs given by
\beq{equ:standardrunning}
\tilde{\phi}(t, x, u) 
 = \phi(t, x, u) 
 - \big\{ \DD_1 \psi(t, x) 
    + \DD_2 \psi(t, x) \left(f(t, x) + g(t, x) u\right)\big\}
    \eta(t).
\eeq
This approach has been taken in~\cite{zeitouni_existence_1988,zeitouni_change_of_variables_1990}, but it seems that stronger assumptions are required to make this approach work (in addition to the existence of the derivatives of $\psi$).
After all, some coercivity property of $\tilde{\phi}$ is required to obtain weakly compact level sets, and even if Hypothesis~\ref{hyp:hypothesisI}\eqref{hyp:thexconvexity} is imposed, further assumptions such as linear growth of $f$ are needed to make sure that for large $u$, the second term in Equation~\eqref{equ:standardrunning} does not override the first one.
Linear growth of $f$ however is not needed in our approach (and is in fact not satisfied for typical geophysical models).
Our approach instead relies on a more careful analysis of the stochastic costs, using the theory of $p$--variation and Young integrals.
%
%
\section{Proof of existence result}
\label{sec:existence}
Before proving Lemma~\ref{lem:welldefined} and the existence result (Thm.~\ref{thm:existence}), we will discuss the notions of $p$--variation and Young integrals.
We will not give proofs as these are standard facts or easy modifications thereof, and references will be provided below.
%
%
%
\begin{definition}
Let $I \subset \R$ be an interval and $V, W$ be finite dimensional vector spaces, with $\abs{.}$ denoting a generic norm.
\begin{enumerate}
\item Let $x \in C(I, V)$ and $p \geq 1$. 
Then the $p$--{\em variation} of $x$ is defined as
\beqn{equ:pvardef}
\var{x}_p  := \left( \sup \sum_i \abs{x(t_{i+1}) - x(t_{i})}^p \right)^{1/p}
\eeq
where the $\sup$ is taken over all finite dissections $D = \{0 = t_0 < t_1 < \ldots < t_n = T\}$ of I.
 
\item Let $x \in C(I, L(V, W))$ and $y \in C(I, V)$. 
The Young integral of $x$ against $y$ is given by
\beqn{equ:youngdef}
\int_I x(s) \: \dd y(s) := \lim \sum_i x(\tau_{i}) (y(t_{i+1})- y(t_{i}))
\eeq
where the sum runs over a finite dissection of $I$ and $\tau_{i} \in [t_{i}, t_{i+1}]$ for all $i$, and the limit is with respect to the resolution $\abs{D} := \sup_i \abs{t_{i+1} - t_{i}}$ of the dissections going to zero.
It is required that the existence of the limit and its value does not dependent on the choice of the dissections nor on the precise placement of the $\tau_{i}$.
\end{enumerate}
\end{definition}
\begin{lemma}
\label{lem:youngproperties} 
Let $x \in C(I, L(V, W))$ and $y \in C(I, V)$, both $\var{x}_p$ and $\var{y}_q$ finite with $\theta := \frac{1}{p} + \frac{1}{q} > 1$, then
\begin{enumerate}
\item the Young integral of $x$ against $y$ exists and 
\beqn{equ:youngproperties}
\abs{\int_I x(s) \: \dd y(s) - x(0) (y(T)- y(0)) }
\leq \frac{1}{1 - 2^{1 - \theta}} \var{x}_p \var{y}_q,
\eeq
\item the Young integral permits integration by parts:
\beqn{equ:young_by_parts}
\int_I x(s) \: \dd y(s) 
= x(T)y(T) - x(0)y(0) - \int_I y(s) \: \dd x(s).
\eeq
\end{enumerate}
\end{lemma}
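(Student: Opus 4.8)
\smallskip
\noindent\textbf{Proof proposal.}
This is the classical Young / Love--Young estimate together with integration by parts, and the plan is to prove it by the standard ``successive point removal'' argument for Riemann--Stieltjes sums. For a subinterval $[a,b]\subseteq I$ write $\var{x}_{p;[a,b]}$, $\var{y}_{q;[a,b]}$ for the $p$-- and $q$--variations of the restrictions of $x$ and $y$; these are superadditive, e.g.\ $\var{x}_{p;[a,b]}^p+\var{x}_{p;[b,c]}^p\le\var{x}_{p;[a,c]}^p$, and, since $x$ and $y$ are continuous, both depend continuously on the endpoints and vanish as $\abs{b-a}\to0$. If $\var{x}_p=0$ then $x$ is constant and both claims are immediate, so assume $\var{x}_p,\var{y}_q>0$.

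The first step is a point--removal estimate: for a dissection $D=\{0=t_0<\dots<t_n=T\}$ with $n\ge2$ there is an interior node $t_j$ such that, writing $D'=D\setminus\{t_j\}$ and $S(D)=\sum_i x(t_i)(y(t_{i+1})-y(t_i))$ for the left--point sum, one has $\abs{S(D)-S(D')}=\abs{(x(t_j)-x(t_{j-1}))(y(t_{j+1})-y(t_j))}\le(2/(n-1))^{\theta}\var{x}_p\var{y}_q$ with $\theta=\tfrac1p+\tfrac1q$. Indeed, putting $c_j:=\var{x}_{p;[t_{j-1},t_j]}^p/\var{x}_p^p+\var{y}_{q;[t_j,t_{j+1}]}^q/\var{y}_q^q$ for $j=1,\dots,n-1$, superadditivity gives $\sum_j\var{x}_{p;[t_{j-1},t_j]}^p\le\var{x}_p^p$ and $\sum_j\var{y}_{q;[t_j,t_{j+1}]}^q\le\var{y}_q^q$, hence $\sum_jc_j\le2$ and some $c_j\le2/(n-1)$; for that $j$, $\abs{x(t_j)-x(t_{j-1})}\le(2/(n-1))^{1/p}\var{x}_p$ and $\abs{y(t_{j+1})-y(t_j)}\le(2/(n-1))^{1/q}\var{y}_q$, and multiplying yields the bound. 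Iterating down to the dissection $\{0,T\}$ and summing the costs gives $\abs{S(D)-x(0)(y(T)-y(0))}\le C(\theta)\var{x}_p\var{y}_q$ uniformly in $D$; removing nodes one at a time gives $C(\theta)=2^\theta\sum_{m\ge1}m^{-\theta}$, and the sharper constant $\tfrac1{1-2^{1-\theta}}$ of the statement follows from the dyadic bookkeeping of the sewing lemma applied to $A_{s,t}=x(s)(y(t)-y(s))$, whose defect is $\delta A_{s,u,t}=-(x(u)-x(s))(y(t)-y(u))$.

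Next I would upgrade this uniform bound to convergence. Given dissections $D,D'$, compare both to $D\cup D'$: passing from $D$ to $D\cup D'$ inserts nodes inside the intervals of $D$, and the point--removal estimate applied within each $[t_i,t_{i+1}]$ gives $\abs{S(D)-S(D\cup D')}\le C(\theta)\sum_i\var{x}_{p;[t_i,t_{i+1}]}\var{y}_{q;[t_i,t_{i+1}]}$. This tends to $0$ as $\abs{D}\to0$, because $\sum_i\var{x}_{p;[t_i,t_{i+1}]}^p\le\var{x}_p^p$, $\sum_i\var{y}_{q;[t_i,t_{i+1}]}^q\le\var{y}_q^q$, while $\max_i\var{x}_{p;[t_i,t_{i+1}]}\to0$ by continuity of the variation (and $\theta>1$). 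Hence $S(D)$ is Cauchy as $\abs{D}\to0$; its limit is by definition $\int_I x\,\dd y$, and the uniform bound on $[0,T]$ is the Love--Young inequality, which proves part~(a). Independence from the evaluation points $\tau_i\in[t_i,t_{i+1}]$ follows from the same estimate: replacing $x(t_i)$ by $x(\tau_i)$ changes the sum by $\sum_i(x(\tau_i)-x(t_i))(y(t_{i+1})-y(t_i))$, again bounded by $\sum_i\var{x}_{p;[t_i,t_{i+1}]}\var{y}_{q;[t_i,t_{i+1}]}\to0$.

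For part~(b): applying part~(a) with the roles of $(x,p)$ and $(y,q)$ interchanged (the hypothesis $\theta>1$ is symmetric) gives the existence of $\int_I y\,\dd x$. For any dissection, the left--tagged sum for $\int x\,\dd y$ and the right--tagged sum for $\int y\,\dd x$ obey the exact telescoping identity $\sum_i x(t_i)(y(t_{i+1})-y(t_i))+\sum_i y(t_{i+1})(x(t_{i+1})-x(t_i))=\sum_i(x(t_{i+1})y(t_{i+1})-x(t_i)y(t_i))=x(T)y(T)-x(0)y(0)$; letting $\abs{D}\to0$ and using part~(a) and the tag--independence just shown yields $\int_I x\,\dd y=x(T)y(T)-x(0)y(0)-\int_I y\,\dd x$. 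I expect the point--removal/pigeonhole step (and the bookkeeping producing the precise constant) to be the main obstacle; the convergence and integration--by--parts steps are then routine, given the continuity of $x,y$ and the superadditivity of restricted variation.
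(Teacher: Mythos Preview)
Your sketch is correct and follows the standard Love--Young point--removal argument (equivalently, the sewing lemma applied to $A_{s,t}=x(s)(y(t)-y(s))$), together with the telescoping identity for integration by parts. Note, however, that the paper does not actually prove this lemma: it simply cites \cite{friz_rough_paths_2009}, Theorem~6.8 and Exercise~6.14, for parts~(a) and~(b) respectively, so there is no in-paper proof to compare against---your argument is essentially the proof found in that reference.
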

For the first and second part of this Lemma, see~\cite{friz_rough_paths_2009}, Theorem~6.8 and Exercise~6.14, respectively.
In order to make sense of the integration by parts formula, we note that every element $y$ of $V$ gives rise to a linear mapping $\hat{y}:L(V, W) \to W, A \to A y$, and the integral on the right hand side of the integration by parts formula is to be understood as $\int_I \hat{y}(s) \: \dd x(s)$. 
With slight abuse of notation however, we will continue to write $y$ instead of $\hat{y}$.
\begin{lemma}
\label{lem:varproperties} 
Let $x \in C(I, L(V, W))$ and $y \in C(I, V)$
\begin{enumerate}
\item Suppose $y$ is absolutely continuous, then $\var{y}_p \leq \norm{\dot{y}}_1$ for all $p \geq 1$.
\item If $p > q$, then $\var{y}_p \leq \var{y}_q$.
\item (Interpolation inequality) If $p > q$, then 
\beqn{equ:interpolation}
\var{y}_p \leq \left(\var{y}_q\right)^{\frac{q}{p}} \left(\sup_{t,s \in I} \abs{y(t) - y(s)}\right)^{1 - \frac{q}{p}}.
\eeq
\item (Product rule)
\beqn{equ:productrule}
\var{x y}_p \leq \var{x}_p \norm{y}_{\infty} + \var{y}_p \norm{x}_{\infty}
\eeq
\item (Chain rule) Suppose that the function $\psi:I \times V \to W$ is H\"{o}lder on $I \times \{x \in V; \abs{x} \leq \norm{y}_{\infty}\}$ with exponent $\kappa$ and constant $K$, and that $\kappa p \geq 1$, then 
\beqn{equ:chainrule}
\var{\psi(., y(.))}_p \leq K (\var{y}_{\kappa p}^{\kappa} + \abs{I}^{\kappa})
\eeq
\end{enumerate}
\end{lemma}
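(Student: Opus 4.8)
The plan is to derive all five estimates from a single elementary scheme: fix an arbitrary dissection $D = \{0 = t_0 < t_1 < \dots < t_n = T\}$ of $I$, estimate the $\ell^p$-norm of the finite sequence of increments over $D$ using standard inequalities for finite sequences, and then take the supremum over $D$ at the very end (if the right-hand side is infinite there is nothing to prove, and if it is finite the supremum of the left-hand side is controlled). Throughout I would invoke three facts about finite nonnegative sequences $(a_i)$: Minkowski's inequality $\norm{(a_i + b_i)}_{\ell^p} \le \norm{(a_i)}_{\ell^p} + \norm{(b_i)}_{\ell^p}$; the embedding $\norm{(a_i)}_{\ell^p} \le \norm{(a_i)}_{\ell^q}$ for $p \ge q \ge 1$, equivalently $\sum_i a_i^p \le \big(\sum_i a_i^q\big)^{p/q}$; and the subadditivity $(u+v)^\kappa \le u^\kappa + v^\kappa$ for $u, v \ge 0$ and $\kappa \in (0,1]$. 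Note that $\norm{x}_\infty$ and $\norm{y}_\infty$ are automatically finite since $I$ is compact and $x, y$ are continuous.

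For \emph{(a)}, absolute continuity gives $\abs{y(t_{i+1}) - y(t_i)} \le \int_{t_i}^{t_{i+1}} \abs{\dot y(s)} \,\dd s$; summing $p$-th powers and applying the embedding with $q = 1$ (i.e.\ $\sum_i c_i^p \le (\sum_i c_i)^p$) yields $\sum_i \abs{y(t_{i+1}) - y(t_i)}^p \le \norm{\dot y}_1^p$, uniformly in $D$. For \emph{(b)}, apply the embedding directly to $a_i = \abs{y(t_{i+1}) - y(t_i)}$ to get $\sum_i a_i^p \le \big(\sum_i a_i^q\big)^{p/q} \le \var{y}_q^p$. For \emph{(c)}, write $a_i^p = a_i^q \cdot a_i^{p-q}$ and bound $a_i^{p-q} \le M^{p-q}$ with $M := \sup_{s,t \in I} \abs{y(t) - y(s)}$; summing gives $\sum_i a_i^p \le \var{y}_q^q \, M^{p-q}$, and the $p$-th root together with the supremum over $D$ produces the interpolation inequality, since $(\var{y}_q^q)^{1/p} = \var{y}_q^{q/p}$ and $(M^{p-q})^{1/p} = M^{1 - q/p}$.

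For \emph{(d)}, the identity $x(t_{i+1}) y(t_{i+1}) - x(t_i) y(t_i) = x(t_{i+1})\big(y(t_{i+1}) - y(t_i)\big) + \big(x(t_{i+1}) - x(t_i)\big) y(t_i)$ together with submultiplicativity of the operator norm gives $\abs{(xy)(t_{i+1}) - (xy)(t_i)} \le \norm{x}_\infty \abs{y(t_{i+1}) - y(t_i)} + \norm{y}_\infty \abs{x(t_{i+1}) - x(t_i)}$; Minkowski's inequality applied to these two sequences, followed by the supremum over $D$, yields the product rule. For \emph{(e)}, I would first note we may assume $\kappa \le 1$ (otherwise $\psi(t,\cdot)$ is constant on the ball in question and the claim is trivial). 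Adopting the convention that the Hölder hypothesis means $\abs{\psi(t,x) - \psi(s,x')} \le K(\abs{t-s}^\kappa + \abs{x-x'}^\kappa)$, one gets $\abs{\psi(t_{i+1}, y(t_{i+1})) - \psi(t_i, y(t_i))} \le K\big(\abs{t_{i+1} - t_i}^\kappa + \abs{y(t_{i+1}) - y(t_i)}^\kappa\big)$; Minkowski bounds the $\ell^p$-norm of the sum by $K$ times $\big(\sum_i \abs{t_{i+1} - t_i}^{\kappa p}\big)^{1/p} + \big(\sum_i \abs{y(t_{i+1}) - y(t_i)}^{\kappa p}\big)^{1/p}$. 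Since $\kappa p \ge 1$, the first term is at most $\big((\sum_i \abs{t_{i+1} - t_i})^{\kappa p}\big)^{1/p} = \abs{I}^\kappa$, and the second equals $\big((\sum_i \abs{y(t_{i+1}) - y(t_i)}^{\kappa p})^{1/(\kappa p)}\big)^\kappa \le \var{y}_{\kappa p}^\kappa$; taking the supremum over $D$ finishes the argument.

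I do not expect a genuine obstacle: every step is a standard sequence-space inequality applied increment-by-increment. The only point requiring care is the bookkeeping in \emph{(e)}, where the hypothesis $\kappa p \ge 1$ must be used twice --- once to pass from the $\ell^{\kappa p}$-norm of the mesh sizes to their $\ell^1$-norm $\abs{I}$, and once so that $\var{y}_{\kappa p}$ is defined at all --- and where one must fix at the outset the meaning of ``Hölder on a product'' so that the split $(\abs{t-s} + \abs{x-x'})^\kappa \le \abs{t-s}^\kappa + \abs{x-x'}^\kappa$ is legitimate.
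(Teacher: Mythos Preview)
Your proposal is correct and is precisely the kind of direct argument the paper has in mind: the paper does not actually give a proof but merely remarks that item~(a) ``follows directly from the definition of $p$--variation'', cites \cite{friz_rough_paths_2009} for (b) and (c), and says (d) and (e) ``are easy consequences of the definition of $p$--variation''. Your increment-by-increment scheme with the $\ell^p$--$\ell^q$ embedding, Minkowski, and the subadditivity of $t \mapsto t^\kappa$ is exactly how those references and those ``easy consequences'' are unpacked, so there is no substantive difference in approach.
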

The first item follows directly from the definition of $p$--variation; for a proof of items~(b,c), see~\cite{friz_rough_paths_2009}, Proposition~5.3, Proposition~5.5, respectively; items~(d,e) are easy consequences of the definition of $p$--variation.
\begin{lemma}
\label{lem:youngcontinuity} 
Let $y \in C(I, V)$ and $x_n$ in $C(I, L(V, W))$ for all $n \in \N$, with $\sup_n \var{x_n}_p$ as well as $\var{y}_q$ being finite and $\frac{1}{p} + \frac{1}{q} > 1$.
Then $x_n \to x$ uniformly implies 
\beqn{equ:youngcontinuity}
\int_0^t x_n(s) \: \dd y(s) \to \int_0^t x(s) \: \dd y(s)
\eeq
uniformly as a function of the upper limit $t$.
\end{lemma}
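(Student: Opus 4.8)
\textbf{Proof proposal for Lemma~\ref{lem:youngcontinuity}.}

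The plan is to exploit the quantitative bound from Lemma~\ref{lem:youngproperties}(a) together with a telescoping/Cauchy argument on a fine partition. First I would fix $\theta := \frac{1}{p} + \frac{1}{q} > 1$ and set $M := \sup_n \var{x_n}_p < \infty$. Since uniform convergence of $x_n$ to $x$ passes $p$-variation bounds to the limit in the weak sense --- more precisely, for any fixed dissection $D$ the sum $\sum_i \abs{x(t_{i+1}) - x(t_i)}^p$ is the pointwise limit of the corresponding sums for $x_n$, hence bounded by $M^p$ --- we get $\var{x}_p \leq M$ as well, so the limiting Young integral on the right-hand side is well defined by Lemma~\ref{lem:youngproperties}(a). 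The difference $z_n := x_n - x$ then satisfies $z_n \to 0$ uniformly and $\var{z_n}_p \leq 2M$ for all $n$.

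Next I would estimate $\int_0^t z_n(s)\,\dd y(s)$ uniformly in $t$. The natural tool is the interpolation inequality, Lemma~\ref{lem:varproperties}(c): for any $p' > p$,
\[
\var{z_n}_{p'} \leq \left(\var{z_n}_p\right)^{p/p'} \left(\sup_{s,t \in I}\abs{z_n(t) - z_n(s)}\right)^{1 - p/p'} \leq (2M)^{p/p'}\,\big(2\norm{z_n}_\infty\big)^{1 - p/p'}.
\]
Choosing $p'$ close enough to $p$ that still $\frac{1}{p'} + \frac{1}{q} > 1$ (possible since $\theta > 1$ is an open condition), the right-hand side tends to $0$ as $n \to \infty$ because $\norm{z_n}_\infty \to 0$ and the exponent $1 - p/p'$ is positive. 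Applying Lemma~\ref{lem:youngproperties}(a) to $z_n$ and $y$ on the subinterval $[0,t]$ with the pair of exponents $(p', q)$ gives
\[
\left|\int_0^t z_n(s)\,\dd y(s) - z_n(0)\big(y(t) - y(0)\big)\right| \leq \frac{1}{1 - 2^{1-\theta'}}\,\var{z_n}_{p',[0,t]}\,\var{y}_{q,[0,t]},
\]
where $\theta' = \frac{1}{p'} + \frac{1}{q}$; since restriction to a subinterval only decreases $p$-variation, the right-hand side is bounded by $\frac{1}{1-2^{1-\theta'}}\var{z_n}_{p'}\var{y}_q$ uniformly in $t$. Together with $\abs{z_n(0)(y(t)-y(0))} \leq 2\norm{z_n}_\infty\norm{y}_\infty$, this shows $\sup_t \big|\int_0^t z_n\,\dd y\big| \to 0$, which is the claim.

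The main obstacle --- really the only delicate point --- is justifying that the interpolation inequality can be used with an exponent $p'$ still compatible with $q$; this is why the hypothesis is the strict inequality $\frac{1}{p}+\frac{1}{q} > 1$ rather than $\geq 1$, and the argument would fail at the borderline. A secondary point to be careful about is that Lemma~\ref{lem:youngproperties}(a) and the interpolation inequality of Lemma~\ref{lem:varproperties}(c) behave well under restriction to $[0,t]$, and that the constant $\frac{1}{1-2^{1-\theta'}}$ does not blow up --- but since $\theta' > 1$ is fixed once $p'$ is chosen, this constant is a harmless fixed number, independent of $n$ and $t$. Everything else is a routine application of the already-quoted estimates.
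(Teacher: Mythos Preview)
Your argument is correct. The paper does not actually give a proof of this lemma but simply cites Proposition~6.13 of Friz--Victoir; your proof via the interpolation inequality (Lemma~\ref{lem:varproperties}(c)) combined with the Young--Lo\`eve estimate (Lemma~\ref{lem:youngproperties}(a)) is precisely the standard argument underlying that proposition, so you have supplied the details that the paper outsources to the reference.
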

This is a consequence of Proposition~6.13 in~~\cite{friz_rough_paths_2009}.
\begin{lemma}
\label{lem:varwiener} 
For any $p>2$, the $p$--variation of the observations $\var{\eta}_p$ is a measurable and almost surely finite random variable.
\end{lemma}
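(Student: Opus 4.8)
The plan is to combine the classical decomposition $\eta = \zeta + W$ with the subadditivity of $p$-variation and known path properties of Brownian motion. Since $\zeta$ is absolutely continuous, Lemma~\ref{lem:varproperties}(a) gives $\var{\zeta}_p \leq \norm{\dot\zeta}_1 < \infty$ deterministically for every $p \geq 1$; moreover $\var{\cdot}_p$ is subadditive (immediate from the triangle inequality applied termwise in the defining sums, or from the fact that it is a seminorm on the space of continuous paths of finite $p$-variation). Hence $\var{\eta}_p \leq \var{\zeta}_p + \var{W}_p$, and it suffices to show that $\var{W}_p$ is a measurable, almost surely finite random variable for each fixed $p > 2$.

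For finiteness, I would invoke the standard fact that Brownian paths have finite $p$-variation for every $p > 2$ almost surely; this is classical (e.g.\ Taylor, L\'evy, or as stated in~\cite{friz_rough_paths_2009}), and the cleanest route is via L\'evy's modulus of continuity, which gives that $W$ is almost surely $\alpha$-H\"older for every $\alpha < 1/2$, and an $\alpha$-H\"older path on a compact interval has finite $p$-variation whenever $\alpha p \geq 1$, i.e.\ whenever $p \geq 1/\alpha$; choosing $\alpha \in (1/p, 1/2)$ (possible precisely because $p > 2$) yields $\var{W}_p < \infty$ a.s. Alternatively one can cite the explicit result that $n^{1-p/2}\sum |W(t_{i+1})-W(t_i)|^p$ along dyadic partitions converges a.s., which bounds the $p$-variation directly.

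For measurability, the point is that $\var{W}_p$, and more generally $\var{\eta}_p$, is defined as a supremum over \emph{all} finite dissections of $I$, which is an uncountable family, so measurability is not completely automatic. The standard remedy is to note that since $\eta$ (equivalently $W$) has continuous sample paths, the supremum is unchanged if one restricts to dissections with rational endpoints: given any dissection and any $\varepsilon > 0$, perturbing the breakpoints to nearby rationals changes each increment by less than $\varepsilon$ by uniform continuity on the compact interval, hence changes the corresponding sum by an arbitrarily small amount. The countable supremum of the maps $\omega \mapsto \sum_i |\eta(t_{i+1})(\omega) - \eta(t_i)(\omega)|^p$ over rational dissections is measurable because each such map is a continuous (indeed polynomial-type) function of finitely many coordinates $\omega \mapsto \omega(t_i)$, each of which is measurable by the canonical realisation of $\Omega = C(I,\R^d)$. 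Taking the $p$-th root preserves measurability, so $\var{\eta}_p$ is measurable.

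The main obstacle, such as it is, is the measurability reduction to rational dissections; the almost sure finiteness is a textbook fact about Brownian motion that can simply be cited. I would present the argument in the order: (i) reduce to rational dissections to get measurability; (ii) decompose $\eta = \zeta + W$ and use subadditivity of $\var{\cdot}_p$ together with $\var{\zeta}_p \leq \norm{\dot\zeta}_1$; (iii) conclude a.s.\ finiteness of $\var{W}_p$ from L\'evy's modulus of continuity and the H\"older-implies-finite-$p$-variation bound, valid since $p > 2$. If $\zeta$ is itself random, one only needs $\norm{\dot\zeta}_1 < \infty$ almost surely, which is part of the standing assumption that $\zeta$ is absolutely continuous (with integrable derivative), so no extra care is needed there.
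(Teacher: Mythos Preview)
Your proof is correct and follows essentially the same route as the paper: decompose $\eta = \zeta + W$, use subadditivity of $\var{\cdot}_p$ together with Lemma~\ref{lem:varproperties}(a) for the absolutely continuous part, and invoke the H\"older regularity of Brownian paths to get finite $p$-variation for $p>2$. In fact you are more careful than the paper on one point: the paper's proof does not address measurability at all, whereas your reduction to rational dissections via path continuity fills that gap cleanly.
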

\begin{proof}
We are assuming $\eta = \zeta + W$, and since $\zeta$ is absolutely continuous, it has finite variation of any order by Lemma~\ref{lem:varproperties}(a).
It is well known that almost any path of the Wiener process is $\frac{1}{p}$--H\"{o}lder for any $p > 2$ on compact intervals, which implies finite $p$--variation for $p>2$.
It follows directly from the definition and Minkowski's inequality that $p$--variation is subadditive, hence $\var{\eta}_p$ is almost surely finite.
\end{proof} 
In view of Lemma~\ref{lem:varwiener}, we can find a set $\Omega_0 \subset \Omega$ with $\P(\Omega_0) = 1$ so that whenever $\omega \in \Omega_0$, we have $\var{\eta}_{p} < \infty$ for any $p > 2$. 
Without loss of generality, we will henceforth assume $\Omega = \Omega_0$.
In other words, we can simply assume the observations to be a continuous function $\eta(t)$ having finite $p$-variation for any $p > 2$.
This allows us to forget about the stochastic character of the observations and instead approach data assimilation pathwise for each realisation of the observations.
In particular, we obtain from Lemma~\ref{lem:youngproperties} that for any given observation path, the integral $\int_I y(t) \: \dd \eta(t)$ is well defined and finite for all functions $y$ which have finite $p$-variation with $p < 2$.
\begin{proof}[Proof of Lemma~\ref{lem:welldefined}]
Item~(a) is a basic result in the theory of ODE's.
To prove item~(b) use the remark made just prior to this proof and the fact that $t \to \psi(t, x(t))$ has bounded $\frac{1}{\kappa}$--variation with $\kappa < \frac{1}{2}$ (as we will show in the proof of Theorem~\ref{thm:existence}).
Item~(c) follows from Hypothesis~\ref{hyp:hypothesisI}\eqref{hyp:thexconvexity}, the fact that $x$ is continuous and hence bounded on $I$, and the fact that a continuous function of measurable functions is measurable.
Item~(d) follows because if $(x, u)$ is feasible with $\norm{u}_r = \infty$, the lower bound on $\phi$ in Hypothesis~\ref{hyp:hypothesisI}\eqref{hyp:thexconvexity} yields that the deterministic costs are infinite.
\end{proof}
The following Lemma will be essential in the proof of Theorem~\ref{thm:existence}, ensuring the coercivity of the stochastic costs:
\begin{lemma}
\label{lem:detailedestimate}
Suppose that there are nonegative constants $r, q, \gamma_1, \gamma_2, C_1, C_2$ with $r \geq 1$ and $1 \leq q < 2$ so that for any feasible process $(x, u)$ we have
\begin{align}
\label{equ:detailedestcon1}
\norm{\psi(., x(.))}_{\infty}
 & \leq  C_1 (1 + \norm{u}_r^{\gamma_1}) \\
\label{equ:detailedestcon2}
\var{\psi(., x(.))}_{q}
 & \leq  C_2 (1 + \norm{u}_r^{\gamma_2}).
\end{align}
If we let
\beqn{equ:theta1theta2}
\begin{split}
\theta_1 & := \min\{\gamma_1 (1 - \frac{q}{2}) + \gamma_2 \frac{q}{2}, \gamma_2\}\\
\theta_2 & := \max\{\gamma_1 (1 - \frac{q}{2}) + \gamma_2 \frac{q}{2}, \gamma_2\}
\end{split}
\eeq
then for any $\theta$ with $\theta_1 < \theta < \theta_2$
there is a random constant $D_{\theta}$ so that
\[
\abs{\int_I \psi(t, x(t)) \: \dd \eta_t }
\leq D_{\theta} \left( 1 
    + \norm{u}_r^{\theta} \right).
\]
\end{lemma}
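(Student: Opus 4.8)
The plan is to split the Young integral using the quantitative estimate from Lemma~\ref{lem:youngproperties}(a), interpolate the $q$-variation of $\psi(\cdot, x(\cdot))$ against the supremum norm to gain a tunable exponent, and then bound the resulting product of powers of $\norm{u}_r$ by a single power $\norm{u}_r^{\theta}$. First I would fix a $p > 2$ with $\frac{1}{p} + \frac{1}{\tilde q} > 1$, where $\tilde q$ is an exponent slightly below $2$; since $q < 2$ we have (by Lemma~\ref{lem:varproperties}(b)) $\var{\psi(\cdot,x(\cdot))}_{\tilde q} \le \var{\psi(\cdot,x(\cdot))}_q$ for $\tilde q \ge q$, so the pairing with $\eta$ (which has finite $p$-variation for every $p>2$ by Lemma~\ref{lem:varwiener}) makes sense. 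Applying Lemma~\ref{lem:youngproperties}(a) with $\theta' := \frac{1}{p} + \frac{1}{\tilde q} > 1$ gives
\[
\abs{\int_I \psi(t, x(t)) \: \dd \eta_t}
\leq \abs{\psi(0, x(0))} \, \abs{\eta(T) - \eta(0)}
+ \frac{1}{1 - 2^{1-\theta'}} \var{\psi(\cdot, x(\cdot))}_{\tilde q} \, \var{\eta}_p.
\]
The first term is controlled by $\norm{\psi(\cdot,x(\cdot))}_\infty \le C_1(1 + \norm{u}_r^{\gamma_1})$ via~\eqref{equ:detailedestcon1}, and $\abs{\eta(T)-\eta(0)}$ is an almost surely finite random constant; the factor $\var{\eta}_p$ in the second term is likewise an almost surely finite random constant by Lemma~\ref{lem:varwiener}.

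The key step is the interpolation. By Lemma~\ref{lem:varproperties}(c) with the pair $\tilde q > q$,
\[
\var{\psi(\cdot, x(\cdot))}_{\tilde q}
\leq \big( \var{\psi(\cdot, x(\cdot))}_q \big)^{q/\tilde q}
\big( \operatorname{osc} \psi(\cdot, x(\cdot)) \big)^{1 - q/\tilde q}
\leq \big( 2 \norm{\psi(\cdot, x(\cdot))}_\infty \big)^{1 - q/\tilde q}
\big( \var{\psi(\cdot, x(\cdot))}_q \big)^{q/\tilde q},
\]
using that the oscillation is at most twice the supremum norm. Feeding in~\eqref{equ:detailedestcon1} and~\eqref{equ:detailedestcon2} yields a bound of the form (random constant) times
\[
\big(1 + \norm{u}_r^{\gamma_1}\big)^{1 - q/\tilde q} \big(1 + \norm{u}_r^{\gamma_2}\big)^{q/\tilde q}
\leq \text{(const)} \, \big(1 + \norm{u}_r^{\gamma_1(1 - q/\tilde q) + \gamma_2 (q/\tilde q)}\big),
\]
where the last inequality is the elementary fact that $(1+a^{s})(1+a^{t}) \le 2(1 + a^{s+t})$ for $a \ge 0$ and $s,t \ge 0$ applied after distributing the powers (or more directly, $(1+a^{\mu})^\rho \le 2^\rho(1 + a^{\mu\rho})$). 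Writing $\tilde q \uparrow 2$ makes the exponent $\gamma_1(1-q/\tilde q) + \gamma_2(q/\tilde q)$ range over an interval with endpoint $\gamma_1(1 - q/2) + \gamma_2 q/2$; combining with the plain bound $\var{\psi(\cdot,x(\cdot))}_{\tilde q} \le \var{\psi(\cdot,x(\cdot))}_q \le C_2(1 + \norm{u}_r^{\gamma_2})$ (which handles the exponent $\gamma_2$), one sees that every exponent strictly between $\theta_1$ and $\theta_2$ is attainable by a suitable choice of $\tilde q$ close enough to $2$. One then absorbs the lower-order first term $\norm{u}_r^{\gamma_1}$ into $\norm{u}_r^{\theta}$ as well, possibly at the cost of enlarging the constant, since $\gamma_1 \le \theta_2$ and for $\theta > \theta_1$ we can always dominate any fixed smaller power by $1 + \norm{u}_r^{\theta}$.

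\textbf{Main obstacle.} The routine parts are the interpolation inequality and the algebra of combining powers; the one point that needs care is the bookkeeping of which $\theta$ are actually reached. Because $\frac{1}{p} + \frac{1}{\tilde q} > 1$ forces $\tilde q < 2$ strictly (as $p > 2$ strictly), the exponent $\gamma_1(1 - q/\tilde q) + \gamma_2 q/\tilde q$ never attains its limiting value $\gamma_1(1-q/2) + \gamma_2 q/2$, only approaches it — which is exactly why the statement asks for $\theta$ strictly between $\theta_1$ and $\theta_2$ rather than allowing the endpoints. I would make sure the choice of $\tilde q$ (equivalently of $p$) depends only on the target $\theta$ and not on the particular process $(x,u)$, so that the random constant $D_\theta$ is uniform over all feasible processes; this is automatic once one fixes $\tilde q$ first and then applies the estimates, since~\eqref{equ:detailedestcon1} and~\eqref{equ:detailedestcon2} hold with constants independent of $(x,u)$. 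The distinction between the two cases $\gamma_1(1-q/2)+\gamma_2 q/2 \lessgtr \gamma_2$ (i.e. whether $\gamma_1 \lessgtr \gamma_2$) in the definitions of $\theta_1, \theta_2$ is handled uniformly by taking the min/max, since we have both the interpolated bound and the crude $\gamma_2$ bound available and may use whichever gives the smaller power for a given $\theta$.
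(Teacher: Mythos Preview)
Your approach is essentially the same as the paper's: apply the Young estimate of Lemma~\ref{lem:youngproperties}(a), then interpolate the $\tilde q$--variation of $\psi(\cdot,x(\cdot))$ between its $q$--variation and its sup norm via Lemma~\ref{lem:varproperties}(c), and finally choose the interpolation exponent $\tilde q \in (q,2)$ to hit the target $\theta$. The paper parameterises this the other way round (fix $\theta$, solve $\theta = \gamma_1(1-\tau) + \gamma_2\tau$ for $\tau$, set $p = q/\tau$), but this is the same computation.

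There is, however, one genuine slip in your bookkeeping of the boundary term. You bound $\abs{\psi(0,x(0))}\,\abs{\eta(T)-\eta(0)}$ by $\norm{\psi(\cdot,x(\cdot))}_\infty \,\abs{\eta(T)-\eta(0)} \le C_1(1+\norm{u}_r^{\gamma_1})\,\abs{\eta(T)-\eta(0)}$ and then assert that $\gamma_1 \le \theta_2$, so that this power can be absorbed into $1+\norm{u}_r^{\theta}$. But $\gamma_1 \le \theta_2$ is \emph{false} when $\gamma_1 > \gamma_2$: in that case $\theta_2 = \gamma_1(1-q/2) + \gamma_2\,q/2 < \gamma_1$, so for every admissible $\theta < \theta_2$ the term $\norm{u}_r^{\gamma_1}$ genuinely dominates $1+\norm{u}_r^{\theta}$ and cannot be absorbed. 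The fix is to avoid the sup--norm bound on the boundary term altogether: since every feasible process has $x(0) = \xi$ fixed, $\psi(0,x(0)) = \psi(0,\xi)$ is a constant independent of $(x,u)$, and the whole boundary term $\abs{\psi(0,\xi)}\,\abs{\eta(T)-\eta(0)}$ can go straight into the random constant $D_\theta$. This is exactly what the paper does (implicitly) when it writes $\abs{\int_I \psi\,\dd\eta} \le C_p\bigl(1 + \var{\psi(\cdot,x(\cdot))}_p\bigr)$. With this correction your argument goes through.
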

\begin{proof}
Fix a $\theta$ as stated and set $\tau$ so that $\theta = \gamma_1 (1 - \tau) + \gamma_2 \tau $.
Then $\frac{q}{2} < \tau < 1$, and if we define $p = \frac{q}{\tau}$, we obtain $q < p < 2$, so we can assume that $p = \frac{2}{1 + 2\epsilon}$ for some $\epsilon > 0$.
Put $\hat{p} = \frac{2}{1 - \epsilon} > 2$ and observe that $\frac{1}{\hat{p}} + \frac{1}{p} = \frac{1 - \epsilon}{2} + \frac{1 + 2\epsilon}{2} = 1 + \epsilon > 1$.
In view of Lemma~\ref{lem:varwiener}, we can invoke Lemma~\ref{lem:youngproperties} for $\var{\psi(., x(.))}_{p}$ and $\var{\eta}_{\hat{p}}$ which gives that
\[
\abs{\int_I \psi(t, x(t)) \: \dd \eta_t }
\leq C_p\left( 1 + \var{\psi(., x(.))}_{p}\right)
\]
for some random constant $C_p$ which also depends on the choice of $p$.
Next, the interpolation inequality (Lemma~\ref{lem:varproperties}.c) yields
\[
\abs{\int_I \psi(t, x(t)) \: \dd \eta_t }
\leq C_p\left( 
    1 + \var{\psi(., x(.))}_{q}^{\frac{q}{p}}
    \norm{\psi(., x(.))}_{\infty}^{1 - \frac{q}{p}} 
\right).
\]
Now use that $p = \frac{q}{\tau}$ as well as the estimates~(\ref{equ:detailedestcon1},\ref{equ:detailedestcon2}) to complete the proof.
\end{proof}
Note that bounds on the stochastic costs could also be obtained through integration by parts, as already mentioned in the introduction. 
This requires $\psi$ to have a derivative with respect to $x$ to begin with, but even if this derivative is bounded, then under the assumed conditions the stochastic term is bounded by the nonlinearity estimate~\eqref{hyp:thexnonlin}, which can be much worse than the estimates given by Lemma~\ref{lem:detailedestimate}.
We are now ready to prove Theorem~\ref{thm:existence}.
\begin{proof}[Proof of Theorem~\ref{thm:existence}]
Let $(x_n, u_n), n = 0, 1, \ldots$ be a minimising sequence with $\norm{u_n}_r < \infty$ for all $n$ and $(x_0, u_0)$ being the admissible process which exists by assumption.
We know that the costs $A(x_n, u_n)$ are bounded above (by $A(x_0, u_0) < \infty$).
Hypothesis~\ref{hyp:hypothesisI}.(\ref{hyp:thexconvexity}) implies that for any process 
\beq{equ:lowerboundA}
A(x, u) \geq C_{\phi} \int_I 
    \left(\abs{u(t)}^r - \abs{x(t)}^{\delta}\right)
    \: \dd t 
+ \int_I \psi(t, x(t)) \: \dd \eta_t,
\eeq
and using the energy estimate~\eqref{hyp:thexenergy} and taking into account that $\gamma \delta < r$ (Hyp.~\ref{hyp:hypothesisI},(\ref{hyp:thexvarbounds})), we obtain for the first integral
\beq{equ:lowerboundAref}
\int_I 
    \left(\abs{u(t)}^r - \abs{x(t)}^{\delta}\right) \: \dd t
\geq C \norm{u(t)}^r - b
\eeq
for some $C > 0$ and $b \in \R$.
We aim to control the second integral in Equation~\eqref{equ:lowerboundA} using Lemma~\ref{lem:detailedestimate} with $q = 1/\kappa$.
By Hypothesis~\ref{hyp:hypothesisI},(\ref{hyp:thexhoel}) and the chain rule in Lemma~\ref{lem:varproperties} we obtain
\[
\var{\psi(., x(.))}_q \leq K_{\norm{x}_{\infty}} C' (1 + \var{x}_1^{\kappa})
\]
for some constant $C'$.
Invoking Lemma~\ref{lem:varproperties},(a) as well as the estimates~(\ref{hyp:thexhder},\ref{hyp:thexnonlin},\ref{hyp:thexenergy}) we obtain
\beq{equ:varhmaintheorem}
\var{\psi(., x(.))}_q \leq C'' (1 + \norm{u}_r^{\beta \kappa + \gamma \alpha}).
\eeq
Again by Hypothesis~\ref{hyp:hypothesisI},(\ref{hyp:thexhoel}) and the estimate~(\ref{hyp:thexhder}) we get
\[
\norm{\psi(., x(.))}_{\infty} \leq C''' (1 + \norm{x}_{\infty}^{\kappa + \alpha}),
\]
and invoking the energy estimate~(\ref{hyp:thexenergy}) once again this becomes
\beq{equ:normhmaintheorem}
\norm{\psi(., x(.))}_{\infty} \leq C'''' (1 + \norm{u}_r^{\gamma(\kappa + \alpha)}).
\eeq
In equations~(\ref{equ:varhmaintheorem},\ref{equ:normhmaintheorem}), we can use Lemma~\ref{lem:detailedestimate} with $q := 1/\kappa$, $\gamma_1 := \gamma(\kappa + \alpha)$ and $\gamma_2 := \beta \kappa + \gamma \alpha$.
We obtain $\theta_1 = \gamma(\alpha + \kappa - \frac{1}{2}) + \frac{\beta}{2}$ and $\theta_2 = \beta \kappa + \gamma \alpha$ (note that indeed $\theta_1 \leq \theta_2$ because $\gamma \leq \beta$ as discussed just after Hypothesis~\ref{hyp:hypothesisI}).
Now according to the lower bound on $r$ in Hypothesis~\ref{hyp:hypothesisI},(\ref{hyp:thexvarbounds}), we can find $\theta$ so that $\theta_1 < \theta = r-\epsilon$ for some $\epsilon > 0$ and therefore by Lemma~\ref{lem:detailedestimate} there exists a constant $D$ only depending on $r-\epsilon$ and the observations so that 
\[
\abs{\int_I \psi(t, x(t)) \: \dd \eta_t }
\leq D \left( 1 
    + \norm{u}_r^{r-\epsilon} \right).
\]
Using this as well as the estimate~\eqref{equ:lowerboundAref} in Equation~\eqref{equ:lowerboundA}, we are finally able to find a constant $C_A$ so that 
\beq{equ:fulllowerboundA}
A(x, u) \geq C_A (1 + \norm{u}_r^r ).
\eeq
Since $A(x_n, u_n)$ is bounded, we obtain from Equation~\eqref{equ:fulllowerboundA} that $\norm{u_n}_r$ must be bounded.
Hypothesis~\ref{hyp:hypothesisI}.(\ref{hyp:thexvarbounds}) then implies that $\norm{x_n}_{\infty}$ and $\norm{\dot{x}_n}_r$ are bounded.
Since $L_r$ is reflexive, by weak compactness, we can assume (taking subsequences which we do not relabel) that $u_n \to u_*$ and $\dot{x}_n \to v_*$.
The boundedness of $\norm{\dot{x}_n}_r$ implies that $x_n$ is equicontinuous, so applying Arzela--Ascoli (and again taking subsequences), we can assume $x_n \to x_*$ uniformly as well.
(Note that a subsequence of a minimising sequence is still a minimising sequence.)
Further 
\[
x_*(t) = \xi + \int_0^t v_*(s) \: \dd s
\]
so $v_* = \dot{x}_*$.
Since the functions $x_n$ are uniformly bounded, we obtain from Hypothesis~\ref{hyp:hypothesisI}\eqref{hyp:thexconvexity} that the functions $t \to \phi(t, x_n(t), u_n(t))$ admit a uniform lower bound $\phi_0$.
We can therefore replace $\phi$ with $\max \{\phi, \phi_0 \}$ on $I \times E \times U$ without affecting continuity or convexity.
We can now apply the Integral semicontinuity theorem of~\cite{clarke_functional_analysis_2013}, (Th.~6.38) to conclude that 
\beq{equ:lsccosts}
\int_I \phi(t, x_*(t), u_*(t)) \: \dd t 
\leq \lim \inf \int_I \phi(t, x_n(t), u_n(t)) \: \dd t
\eeq
The continuity of $\psi$ together with the uniform convergence of $x_n$ implies that $\psi(., x_n(.)) \to \psi(., x_*(.))$ uniformly on $I$. 
The boundedness of $\var{\psi(., x_n(.))}_q$ follows from the boundedness of $\norm{u_n}_r$ and the estimate~\eqref{equ:varhmaintheorem}.
It then follows from Lemma~\ref{lem:youngcontinuity} that  
\beqn{equ:lscyoung}
\int_I \psi(t, x_n(t)) \: \dd \eta(t) 
\to \int_I \psi(t, x_*(t)) \: \dd \eta(t) 
\eeq
This fact together with~(\ref{equ:lsccosts}) implies
\beqn{equ:lscallcosts}
\begin{split}
& \int_I \phi(t, x_*(t), u_*(t)) \: \dd t
 - \int_I \psi(t, x_*(t)) \: \dd \eta(t) \\ 
& \leq \lim \inf \int_I \phi(t, x_n(t), u_n(t)) \: \dd t
 - \lim \int_I \psi(t, x_n(t)) \: \dd \eta(t) \\
& \leq \lim \inf \left\{ \int_I \phi(t, x_n(t), u_n(t)) \: \dd t
 - \int_I \psi(t, x_n(t)) \: \dd \eta(t)  \right\}\\
& = \inf A(x, u)
\end{split}
\eeq
where the last $\inf$ is over all admissible trajectories with $\norm{u}_r < \infty$ and the last equality follows because $(x_n, u_n)$ is a minimising sequence.
It remains to show that $(x_*, u_*)$ is a feasible process. 
The proof is the same as in~\cite{clarke_functional_analysis_2013}, Theorem~23.11. 
%
%
\end{proof}
%
%
%
\section{A maximum principle}
\label{sec:maximumprinciple}
The aim of this section is to prove that under suitable conditions, global minimisers of Problem~VAR satisfy a set of necessary conditions akin to the celebrated Pontryagin maximum principle in optimal control, see for instance~\cite{fleming_controlled_2006},~Sec.I.6.
The classical Pontryagin maximum principle requires certain regularity of the problem, in particular $f$ and $g$ need to be more regular than we have assumed so far.
Recently, there has been considerable progress in relaxing these conditions, at the price of having to use more general notions of derivatives as well as some heavy nonsmooth analysis machinery, see~\cite{bardi_optimal_control_2009},~Sec.III.3.4.\ and in particular~\cite{clarke_functional_analysis_2013},~Th.22.26.
Taking one step at a time, we will prove the maximum principle under essentially classical regularity assumptions, but admitting systems bearing the essential features of the example in Section~\ref{sec:motivatingexample}.
Our main problem will again be dealing with the stochastic part of the costs in an appropriate manner.
We note that here again, this can be attempted by integrating the stochastic costs by parts, which is now permitted in view of Hypothesis~\ref{hyp:hypothesisII}\eqref{hyp:thmaxhoel}.
This results in a classical optimal control problem, but applying the standard maximum principle then requires that the running costs of this problem have one further derivative with respect to $x$, which is not guaranteed by our assumptions.
The following proof however shows that the stochastic costs can be dealt with directly.
\begin{proof}[Proof of Theorem~\ref{thm:maximumprinciple}]
We will only sketch the proof, focussing on the bits relevant for the stochastic costs which are nonstandard. 
A few technical details will be left to subsequent Lemmas.
Our argument closely follows the proof of Theorem~2.1 in Chapter~2 of~\cite{yong_stochastic_controls_1999}.
The proof of Theorem~6.3 in Chapter~I of~\cite{fleming_controlled_2006} uses very similar ideas.\footnote{The proof however seems to contain an error with regards to the variational equation.}
As in these references, we use a needle variation.
Let $s \in I\setminus \{T\}$ be a Lebesgue point for $u$.
(Recall that $s$ is a Lebesgue point for $u$ if 
\mbox{$
\lim_{\epsilon \to 0} \frac{1}{2\epsilon} 
\int_{[-\epsilon, \epsilon]}
    u(s + t)
\; \dd t
 = u(s),
$}
and that if $u$ is integrable over $I$, almost every $s \in I$ is a Lebesgue point of $u$.) 
For some $v \in U$ and $\epsilon > 0$ so that $s + \epsilon \in I$, define the interval $I_{\epsilon} = [s, s + \epsilon]$ and the control
\beqn{equ:needlevariation}
u_{\epsilon}(t) = \begin{cases}
v & \text{ if $t \in I_{\epsilon}$}\\
u(t) & \text{ else.}
\end{cases}
\eeq
Note that $u_{\epsilon} \in \mathcal{U}_r$.
Let $x_{\epsilon}$ be the (unique) solution to the state equation~\eqref{equ:initvalproblem} with $u = u_{\epsilon}$ and $x_{\epsilon}(0) = \zeta$.
To be able to write the state equation in a more compact form in the following, we introduce the function $h(t, x, u) := f(t, x) + g(t, x) u$.
Using this abbreviation we can write 
\beqn{equ:3.10}
\dot{x}_{\epsilon} (t)
 = h(t, x_{\epsilon}(t), u(t))
 + g(t, x_{\epsilon}(t)) (v - u(t)) \cf_{I_{\epsilon}}(t).
\eeq
The main idea of the proof is to consider a Taylor expansion of the state equation and the costs to first order in the perturbation $t \to g(t, x(t)) (v - u(t)) \cf_{I_{\epsilon}}(t)$.
To this end, we define
\beqn{equ:3.20}
\dot{\zeta}_{\epsilon} (t)
 = \DD_2 h(t, x(t), u(t)) \zeta_{\epsilon}(t)
 + g(t, x(t)) (v - u(t)) \cf_{I_{\epsilon}}(t), 
 \quad \zeta_{\epsilon}(0) = 0.
\eeq
We make the following claims:
\beq{equ:mxproof_1}
\norm{x_{\epsilon} - x}_{\infty} = O(\epsilon),
\eeq
%
%
\beq{equ:mxproof_2}
\norm{x_{\epsilon} - x - \zeta_{\epsilon}}_{\infty} = o(\epsilon),
\eeq
and
\beq{equ:mxproof_3}
\begin{split}
A(x_{\epsilon}, u_{\epsilon}) - A(x, u)
& = \int_I \DD_2 \phi(t, x(t), u(t)) \zeta_{\epsilon}(t) \: \dd t
    + \int_I \DD_2 \psi(t, x(t)) \zeta_{\epsilon}(t) \: \dd \eta_t \\
& \quad + \int_{I_{\epsilon}} \phi(t, x(t), v) - \phi(t, x(t), u(t)) \: \dd t
    + o(\epsilon).
\end{split}
\eeq
The proof of these claims is virtually identical to the proof of Lemma~2.2 in Chapter~2 of~\cite{yong_stochastic_controls_1999}, with the exception of the stochastic cost contribution to Equation~\eqref{equ:mxproof_3}, which will be considered in Lemma~\ref{lem:stochcostvariation}.
As will be shown in Lemma~\ref{lem:costate}, the costate equation~\eqref{equ:costate} has a unique solution $\lambda$ which has finite $q$--variation for any $q > 2$.
Very much like in the classical maximum principle, the functions $\zeta_{\epsilon}$ and $\lambda$ are in duality, leading to the identity
\beqn{equ:3.30}
\begin{split}
& \int_{I} \lambda(t) g(t, x(t)) (v - u(t)) \cf_{I_{\epsilon}}(t) \: \dd t\\
& = \int_I \DD_2 \phi(t, x(t), u(t)) \zeta_{\epsilon}(t) \: \dd t
+ \int_I \DD_2 \psi(t, x(t)) \zeta_{\epsilon}(t) \: \dd \eta_t. \end{split}
\eeq
This will be proved in Lemma~\ref{lem:duality}.
We use this relation as well as the definition of the Hamiltonian $H$ in Equation~\eqref{equ:mxproof_3} and invoke the optimality of $u$ to obtain
\beqn{equ:3.40}
0 \leq \int_{I} \cf_{I_{\epsilon}}(t) \left\{ H(t, x(t), \lambda(t), v) -  H(t, x(t), \lambda(t), u(t)) \right\} \: \dd t
+ o(\epsilon).
\eeq
The result now follows if we divide by $\epsilon$ and send $\epsilon \to 0$, keeping in mind that $s$ is a Lebesgue point of $u$
\end{proof}
\begin{lemma}
\label{lem:stochcostvariation}
With the definitions as in the proof of Theorem~\ref{thm:maximumprinciple}, we have
\beqn{equ:3.60}
\int_I \psi(t, x_{\epsilon}(t)) \: \dd \eta_t
 - \int_I \psi(t, x(t)) \: \dd \eta_t
 - \int_I \DD_2 \psi(t, x(t)) \zeta_{\epsilon}(t) \: \dd \eta_t
 = o(\epsilon).
\eeq
\end{lemma}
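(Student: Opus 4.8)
The plan is to reduce the claim to a Young-integral continuity statement by writing the difference as a single Young integral and controlling its two factors via $p$-variation norms. First I would introduce the remainder
\[
\rho_{\epsilon}(t) := \psi(t, x_{\epsilon}(t)) - \psi(t, x(t)) - \DD_2 \psi(t, x(t)) \zeta_{\epsilon}(t),
\]
so that the quantity in~\eqref{equ:3.60} is exactly $\int_I \rho_{\epsilon}(t) \: \dd \eta_t$. By Lemma~\ref{lem:youngproperties}(a), this integral is bounded (up to the constant $(1 - 2^{1-\theta})^{-1}$ and the endpoint term, which vanishes since $\zeta_{\epsilon}(0) = 0$ and $x_{\epsilon}(0) = x(0)$) by $C \, \var{\rho_{\epsilon}}_{p} \var{\eta}_{\hat p}$ for any pair $p < 2 < \hat p$ with $\tfrac1p + \tfrac1{\hat p} > 1$; since $\var{\eta}_{\hat p} < \infty$ almost surely by Lemma~\ref{lem:varwiener}, it suffices to show $\var{\rho_{\epsilon}}_{p} = o(\epsilon)$ for some such $p$, together with a crude bound showing the endpoint contribution is also $o(\epsilon)$ (indeed it is $O(\norm{x_\epsilon - x - \zeta_\epsilon}_\infty) = o(\epsilon)$ by~\eqref{equ:mxproof_2}).

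The heart of the matter is the $p$-variation estimate on $\rho_{\epsilon}$. I would split $\rho_{\epsilon}$ using the first-order Taylor expansion of $\psi$ in its second argument: write
\[
\psi(t, x_{\epsilon}(t)) - \psi(t, x(t)) = \DD_2 \psi(t, x(t))\,(x_{\epsilon}(t) - x(t)) + R_{\epsilon}(t),
\]
where $R_{\epsilon}(t) = \int_0^1 \big[\DD_2\psi(t, x(t) + s(x_\epsilon(t)-x(t))) - \DD_2\psi(t,x(t))\big](x_\epsilon(t)-x(t))\,\dd s$ is the Taylor remainder; this uses the differentiability of $\psi$ in $x$ from Hypothesis~\ref{hyp:hypothesisII}\eqref{hyp:thmaxhoel}. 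Then
\[
\rho_{\epsilon}(t) = \DD_2 \psi(t, x(t))\,\big(x_{\epsilon}(t) - x(t) - \zeta_{\epsilon}(t)\big) + R_{\epsilon}(t).
\]
For the first term I would apply the product rule (Lemma~\ref{lem:varproperties}(d)): its $p$-variation is at most $\var{\DD_2\psi(\cdot,x(\cdot))}_p \norm{x_\epsilon - x - \zeta_\epsilon}_\infty + \var{x_\epsilon - x - \zeta_\epsilon}_p \norm{\DD_2\psi(\cdot,x(\cdot))}_\infty$. The sup norm of the bracket is $o(\epsilon)$ by~\eqref{equ:mxproof_2}, and $\var{\DD_2\psi(\cdot,x(\cdot))}_p$ is finite by the chain rule (Lemma~\ref{lem:varproperties}(e)) applied to the Hölder function $\DD_2\psi$ (valid since $\kappa p \ge 1$ for $p$ close enough to $2$), using $\var{x}_1 \le \norm{\dot x}_1 < \infty$. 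For the $p$-variation of the bracket itself one needs $\var{x_\epsilon - x - \zeta_\epsilon}_p$ small; using the interpolation inequality (Lemma~\ref{lem:varproperties}(c)) one gets $\var{x_\epsilon - x - \zeta_\epsilon}_p \le (\var{x_\epsilon - x - \zeta_\epsilon}_1)^{1/p}(\norm{\cdots}_\infty)^{1-1/p}$, and $\var{\cdot}_1 \le \norm{\dot x_\epsilon - \dot x - \dot\zeta_\epsilon}_1$, which is $O(\epsilon)$ (not better) from the variational equation bookkeeping already implicit in the proof of~\eqref{equ:mxproof_2}; combined with the $o(\epsilon)^{1-1/p}$ factor this gives $o(\epsilon)$ overall once $p > 1$.

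The remaining term $R_{\epsilon}$ is the main obstacle: one must show $\var{R_\epsilon}_p = o(\epsilon)$, exploiting both the Hölder continuity of $\DD_2\psi$ (to gain a factor $\norm{x_\epsilon - x}_\infty^{\kappa'} = O(\epsilon^{\kappa'})$ with $\kappa' > \tfrac12$ the Hölder exponent of $\DD_2\psi$) and the smallness of $x_\epsilon - x$. A clean way is again via the product/chain-rule toolkit: bound increments $R_\epsilon(t') - R_\epsilon(t)$ by splitting into a piece where the ``frozen'' factor $\DD_2\psi(t,x(t))$-difference acts and a piece where the increment of $(x_\epsilon - x)$ acts, each multiplied by a factor of size $O(\norm{x_\epsilon-x}_\infty^{\kappa'})$; then take $p$-variation and use $\var{x_\epsilon - x}_p = o(1)$ (from interpolation and $\norm{x_\epsilon-x}_\infty = O(\epsilon)$) together with the finite $p$-variation of $x$ and $x_\epsilon$. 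The bookkeeping is delicate because one must be sure the exponents combine to beat $\epsilon$; the key numerical input is $\kappa' > \tfrac12$, which lets us choose $p < 2$ with $\kappa' p \ge 1$ and still have $\tfrac1p + \tfrac1{\hat p} > 1$, and the fact that $\norm{x_\epsilon - x}_\infty = O(\epsilon)$ so any positive power of it, times the $O(1)$ or $o(1)$ variation factors, is $o(\epsilon)$ once the total power of $\norm{x_\epsilon - x}_\infty$ strictly exceeds zero and no factor grows. I would isolate this computation as the technical core and present the rest as routine applications of Lemmas~\ref{lem:youngproperties}--\ref{lem:varproperties}.
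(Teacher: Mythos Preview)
Your decomposition of $\rho_\epsilon$ via Taylor's theorem into the term $\DD_2\psi(t,x(t))\big(x_\epsilon-x-\zeta_\epsilon\big)$ plus the Hölder remainder $R_\epsilon$ is exactly what the paper does. The route diverges at the next step: you apply the Young--Loeve bound (Lemma~\ref{lem:youngproperties}(a)) directly and therefore have to prove $\var{\rho_\epsilon}_p = o(\epsilon)$, whereas the paper appeals to Lemma~\ref{lem:youngcontinuity} and hence only needs $\tfrac{1}{\epsilon}\rho_\epsilon \to 0$ uniformly together with \emph{boundedness} (not smallness) of $\var{\tfrac{1}{\epsilon}\rho_\epsilon}_q$ for some $q<2$. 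That is a genuinely lighter task: boundedness follows immediately from the $O(\epsilon)$ bounds on $\norm{\dot x_\epsilon-\dot x}_r$ and $\norm{\dot\zeta_\epsilon}_r$ and the chain rule applied to the $\int_0^1\DD_2\psi(\ldots)\,\dd\tau$ factor, with no interpolation required. Your approach does go through, but only after the interpolation step you sketch for $R_\epsilon$: writing $R_\epsilon = A_\epsilon\cdot(x_\epsilon-x)$ with $A_\epsilon$ the averaged $\DD_2\psi$-difference, the product rule and interpolation between $q=1/\kappa'$ and $p$ give $\var{R_\epsilon}_p = O(\epsilon^{\,1+\kappa'-1/p})$, which is $o(\epsilon)$ precisely when $p>1/\kappa'$; since $\kappa'>\tfrac12$ you may indeed choose such a $p<2$. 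Your closing sentence, however, is misstated: you need the total $\epsilon$-exponent to strictly exceed \emph{one}, not zero, and this is exactly what the choice $p>1/\kappa'$ secures. In short, both arguments are correct; the paper's use of Lemma~\ref{lem:youngcontinuity} trades your interpolation bookkeeping for a softer compactness-type statement.
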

\begin{proof}
According to Lemma~\ref{lem:youngcontinuity}, this follows if 
\[
\frac{1}{\epsilon} \big[
    \psi(t, x_{\epsilon}(t))
     - \psi(t, x(t))
     - \DD_2 \psi(t, x(t)) \cdot \zeta_{\epsilon}(t)\big]
\longrightarrow 0
\]
uniformly in $t$, and for some $q < 2$ the $q$--variation of the left hand side stays bounded.
By Taylor's theorem, we can write the left hand side as
\beq{equ:stochcostvariation1}
\begin{split}
& \int_0^1 
    \DD_2 \psi \big(t, \tau x_{\epsilon}(t) 
        + (1-\tau) x(t) \big)
    - \DD_2 \psi \big(t, x(t) \big)
    \: \dd \tau 
    \cdot \frac{1}{\epsilon} \big[x_{\epsilon}(t) - x(t)\big]\\
& + \DD_2 \psi \big(t, x(t) \big) \frac{1}{\epsilon} \big[x_{\epsilon}(t) - x(t) - \zeta_{\epsilon}(t)\big].
\end{split}
\eeq
Now using Equation~\eqref{equ:mxproof_1} and the fact that $\DD_2 \psi$ is uniformly continuous (by Hypothesis~\ref{hyp:hypothesisII}\eqref{hyp:thmaxhoel}), we obtain that the first term goes to zero uniformly. 
Equation~\eqref{equ:mxproof_2} guarantees that the second term goes to zero uniformly as well. 
The proof of Equation~\eqref{equ:mxproof_1} reveals, upon closer inspection, that the $L_r$--norm of $\frac{1}{\epsilon} \frac{\dd}{\dd t}[x_{\epsilon} - x]$ remains bounded, and the same is true for $\frac{1}{\epsilon} \frac{\dd}{\dd t} \zeta_{\epsilon}$.
By Lemma~\ref{lem:varproperties}(a) this implies that the $q$--variation of these functions remains bounded for any $q \geq 1$.
By Lemma~\ref{lem:varproperties}(d) this leaves to show that also the term
\beqn{equ:stochcostvariation_part}
\int_0^1 
    \DD_2 \psi \big(t, \tau x_{\epsilon}(t) 
        + (1-\tau) x(t) \big)
    \: \dd \tau 
\eeq
in expression~\eqref{equ:stochcostvariation1} has $q$--variation bounded in $\epsilon$.
Define the function $y_{\epsilon}(\tau, t) = \DD_2 \psi \big(t, \tau x_{\epsilon}(t) + (1-\tau) x(t) \big)$.
By Hypothesis~\ref{hyp:hypothesisII},(\ref{hyp:thmaxhoel}), the $q$--variation of $t \to y_{\epsilon}(\tau, t)$ is bounded uniformly in $\epsilon$ and $\tau$, for $q < \frac{1}{2}$.
But it is easily seen that if $\var{y_{\epsilon}(\tau, .)}_q \leq C$, then $t \to \int_0^1 y_{\epsilon}(\tau, t) \: \dd \tau$ has $q$--variation bounded by $C$ as well.
We conclude that as a function of $t$, the expression in Equation~\eqref{equ:stochcostvariation1} has $q$--variation bounded in $\epsilon$.
\end{proof}
\begin{lemma}
\label{lem:costate}
There is a unique solution to the costate equation~\eqref{equ:costate} which has finite $q$--variation for $q > 2$.
\end{lemma}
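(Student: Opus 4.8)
The plan is to treat the costate equation~\eqref{equ:costate} as a linear Young--Volterra integral equation and solve it by a fixed-point argument on a suitable space of continuous functions with finite $q$--variation, $q>2$. Write the right-hand side of~\eqref{equ:costate} as $(\Lambda \lambda)(t)$, where
\[
(\Lambda \lambda)(t) = \int_t^T \lambda(s)\, B(s)\, \dd s + c(t),
\]
with $B(s) := \DD_2 f(s,x(s)) + \DD_2 g(s,x(s))u(s)$ (an integrable, matrix-valued function by Hypothesis~\ref{hyp:hypothesisII}\eqref{hyp:thmaxstate}, since $u\in\mathcal U_r$ and $x$ is continuous hence bounded) and
\[
c(t) := \int_t^T \DD_2\phi(s,x(s),u(s))\,\dd s + \int_t^T \DD_2\psi(s,x(s))\,\dd\eta_s .
\]
First I would check that $c$ is a well-defined, continuous function on $I$ with finite $q$--variation for every $q>2$: the first term is absolutely continuous by Hypothesis~\ref{hyp:hypothesisII}\eqref{hyp:thmaxinteg} (the integrand is dominated by $c|\phi(\cdot,x,u)| + d$, which is integrable along the optimal process since $A(x,u)$ is finite), and hence has finite variation of every order by Lemma~\ref{lem:varproperties}(a); the second term, being a Young integral of $s\mapsto\DD_2\psi(s,x(s))$ against $\eta$, is continuous and, by Lemma~\ref{lem:youngproperties}(a) applied on each subinterval together with the chain rule Lemma~\ref{lem:varproperties}(e) (using Hypothesis~\ref{hyp:hypothesisII}\eqref{hyp:thmaxhoel}, which gives $\DD_2\psi(\cdot,x(\cdot))$ finite $p$--variation for some $p<2$) and Lemma~\ref{lem:varwiener}, has finite $q$--variation for every $q>2$.

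Next I would fix some $q>2$ and work in the Banach space $X = \{\lambda \in C(I,E') : \var{\lambda}_q < \infty\}$ with norm $\norm{\lambda}_\infty + \var{\lambda}_q$, and show $\Lambda$ maps $X$ into itself. The map $\lambda\mapsto \int_t^T\lambda(s)B(s)\,\dd s$ produces an absolutely continuous function whose derivative $-\lambda(t)B(t)$ has $L^1$ norm bounded by $\norm{\lambda}_\infty\norm{B}_1$, so by Lemma~\ref{lem:varproperties}(a) its $q$--variation is at most $\norm{\lambda}_\infty\norm{B}_1$; together with the estimate on $c$ this gives $\Lambda(X)\subset X$. For the contraction, the standard trick is to iterate: running the integral from $t$ to $T$ against the $L^1$ weight $|B|$ gives, for the $n$-fold composite of the linear part, a bound of the form $\big(\int_t^T|B(s)|\,\dd s\big)^n/n!$ times $\norm{\lambda}_\infty$ in sup-norm, and a matching bound on the $q$--variation; hence some power $\Lambda^n$ is a contraction on $X$, which yields a unique fixed point $\lambda\in X$ by the standard corollary of Banach's fixed point theorem. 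Uniqueness among all continuous solutions (not just those in $X$) follows because any continuous solution automatically lies in $X$: feeding a merely continuous $\lambda$ into the right-hand side of~\eqref{equ:costate} produces, as just computed, a function of finite $q$--variation, so $\lambda = \Lambda\lambda$ forces $\var{\lambda}_q<\infty$.

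The main obstacle I anticipate is purely bookkeeping rather than conceptual: making sure that the Young integral term $\int_t^T \DD_2\psi(s,x(s))\,\dd\eta_s$ is genuinely continuous in the upper(-lower) limit $t$ and contributes finite $q$--variation uniformly, and — more delicately — that the needle-variation identity in the proof of Theorem~\ref{thm:maximumprinciple} really pairs $\lambda$ (finite $q$--variation, $q>2$) against $\zeta_\epsilon$ (absolutely continuous, so finite variation) in a way for which the integration-by-parts of Lemma~\ref{lem:youngproperties}(b) and the duality computation of Lemma~\ref{lem:duality} are legitimate; this needs $\tfrac1q + 1 > 1$, which holds trivially, so the regularity budget is comfortable. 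The one genuine subtlety is the appeal to Hypothesis~\ref{hyp:hypothesisII}\eqref{hyp:thmaxinteg} to get integrability of $\DD_2\phi(\cdot,x(\cdot),u(\cdot))$ along the optimal process; this is exactly the condition the authors flagged as needed, and I would invoke it at the point where I verify that $c$ is well defined.
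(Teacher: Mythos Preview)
Your proposal is correct and follows essentially the same approach as the paper: both rewrite the costate equation as $\lambda(t)=\int_t^T \lambda(s)M(s)\,\dd s + F(t)$ with $M\in L^1$ and $F$ continuous of finite $q$--variation for $q>2$, then solve by standard linear ODE arguments and read off the $q$--variation from the decomposition. The paper simply invokes ``standard ODE arguments'' where you spell out the Picard iteration in the $q$--variation norm, but the structure is the same.
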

\begin{proof}
According to Hypothesis~\ref{hyp:hypothesisII}\eqref{hyp:thmaxinteg} the function $t \to \DD_2 \phi(t, x(t), u(t))$ is integrable.
Further, from Hypothesis~\ref{hyp:hypothesisII}\eqref{hyp:thmaxhoel}, along with the absolute continuity of $x$ and the chain rule in Lemma~\ref{lem:varproperties}, we can conclude that $t \to \DD_2 \psi(t, x(t))$ has $q$--variation for some $q < 2$.
These two facts imply that the second and third integral in the definition of $\lambda$ (Equ.~\ref{equ:costate}) are well defined and continuous as a function of the lower limit.
We can thus write Equation~\eqref{equ:costate} in the form
\[
\lambda(t) = \int_t^T \lambda(s) M(s)  \: \dd s + F(t)
\]
where $M(t) := \DD_2 f(t, x(t)) + \DD_2 g(t, x(t)) u(t)$ and $F$ is continuous with finite $q$--variation for $q > 2$. 
Noting that $\abs{M}$ is integrable, it follows by standard ODE arguments that $\lambda$ is well defined and unique.
Further, $\lambda$ is the sum of an absolutely continuous part and a part with finite $q$--variation for $q > 2$.
From this, it is easy to see that $\lambda$ itself has finite $q$--variation for $q > 2$.
\end{proof}
\begin{lemma}[Duality Lemma]
\label{lem:duality}
Let $I = [0, T]$ be an interval and $V$ a finite dimensional vector space with norm $\abs{.}$ and dual space $V'$. 
Consider functions $a: I \to V$ and $b: I \to V'$ so that both $\var{a}_p$ and $\var{b}_q$ are finite, where $\frac{1}{p} + \frac{1}{q} > 1$.
Further, consider a measurable function $M: I \to L(V, V)$ so that the function $t \to \abs{M(t)}$ is integrable (here $\abs{.}$ denotes the operator norm).
Then the equations
\begin{align*}
\zeta(t) & = \zeta(0) + \int_0^t M(s) \zeta(s) \: \dd s + a(t), \\
\lambda(t) & = \lambda(T) + \int_t^T \lambda(s) M(s) \: \dd s + b(t)
\end{align*}
have unique solutions, and both $\var{\zeta}_p$ and $\var{\lambda}_q$ are finite.
Further, the duality relation
\beqn{eqn:3.80}
\lambda(T) \zeta(T) - \lambda(0) \zeta(0) = \int_I \zeta(t) \: \dd b(t) + \int_I \lambda(t) \: \dd a(t)
\eeq
holds.
\end{lemma}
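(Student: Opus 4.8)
The plan is to establish the Duality Lemma in three stages: first the existence and uniqueness of $\zeta$ and $\lambda$ together with their variation bounds, then the duality identity for smooth data, and finally the passage to the general (finite $p$-, $q$-variation) case by a regularization and continuity argument.

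For the first stage, note that the equation for $\zeta$ is a linear integral equation of the form $\zeta(t) = \zeta(0) + \int_0^t M(s)\zeta(s)\,\dd s + a(t)$ with $\abs{M}$ integrable and $a$ continuous; existence and uniqueness follow from a standard Picard iteration or Gr\"{o}nwall argument exactly as in the proof of Lemma~\ref{lem:costate}. Writing $\zeta = \zeta_{\mathrm{ac}} + a$ where $\zeta_{\mathrm{ac}}$ is absolutely continuous, Lemma~\ref{lem:varproperties}(a) gives $\var{\zeta_{\mathrm{ac}}}_p \leq \norm{\dot{\zeta}_{\mathrm{ac}}}_1 < \infty$ for every $p \geq 1$, and subadditivity of $p$-variation (as used in the proof of Lemma~\ref{lem:varwiener}) gives $\var{\zeta}_p \leq \var{\zeta_{\mathrm{ac}}}_p + \var{a}_p < \infty$. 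The same reasoning, run backwards in time, handles $\lambda$ and gives $\var{\lambda}_q < \infty$. In particular all four Young integrals appearing in the statement are well defined by Lemma~\ref{lem:youngproperties}(a), since $\frac{1}{p}+\frac{1}{q} > 1$.

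For the identity itself, the cleanest route is integration by parts for the Young integral (Lemma~\ref{lem:youngproperties}(b)) applied to the product $\lambda(t)\zeta(t)$. One wants to say $\lambda(T)\zeta(T) - \lambda(0)\zeta(0) = \int_I \zeta(t)\,\dd\lambda(t) + \int_I \lambda(t)\,\dd\zeta(t)$, then substitute the differentials: $\dd\zeta(t) = M(t)\zeta(t)\,\dd t + \dd a(t)$ and $\dd\lambda(t) = -\lambda(t)M(t)\,\dd t + \dd b(t)$, so that the two Lebesgue integrals $\int_I \lambda(t)M(t)\zeta(t)\,\dd t$ cancel, leaving exactly $\int_I \zeta(t)\,\dd b(t) + \int_I \lambda(t)\,\dd a(t)$. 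The subtlety is that the Young product rule requires a genuine product/Leibniz formula $\dd(\lambda\zeta) = \lambda\,\dd\zeta + (\dd\lambda)\zeta$, and one must justify that the Young integral is additive over the decomposition of $\zeta$ (resp.\ $\lambda$) into an absolutely continuous part plus a finite-variation part, and that against the absolutely continuous part the Young integral reduces to the ordinary Lebesgue integral $\int \lambda(t) M(t)\zeta(t)\,\dd t$. Both facts are standard properties of the Young integral: linearity in the integrator, and agreement with the Riemann--Stieltjes/Lebesgue integral when the integrator is absolutely continuous. I would invoke these directly, citing~\cite{friz_rough_paths_2009}.

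I expect the main obstacle to be the bookkeeping in this last step — specifically, verifying rigorously that the integration-by-parts formula of Lemma~\ref{lem:youngproperties}(b), which is stated for continuous integrands and integrators of complementary finite $p$- and $q$-variation, applies to $\lambda$ and $\zeta$ (it does, since $\var{\lambda}_q, \var{\zeta}_p < \infty$ with $\frac1p+\frac1q>1$, and the product rule of Lemma~\ref{lem:varproperties}(d) confirms $\lambda\zeta$ is continuous with finite variation of the relevant order), and then disentangling the two Lebesgue contributions so they cancel. An alternative, perhaps safer, route to avoid worrying about whether the general Young integration-by-parts formula has been stated in exactly the form needed: prove the identity first for $a, b$ that are $C^1$ (or piecewise $C^1$), where everything is classical Riemann--Stieltjes calculus and the cancellation is an elementary computation, and then extend to general $a$ with $\var{a}_p < \infty$ and $b$ with $\var{b}_q < \infty$ by approximating $a$ and $b$ in the respective $p$- and $q$-variation topologies (or, more economically, uniformly while keeping variations bounded) and passing to the limit using Lemma~\ref{lem:youngcontinuity} on each of the four integrals, noting that the solution maps $a \mapsto \zeta$ and $b \mapsto \lambda$ are continuous in the appropriate sense by the Gr\"{o}nwall estimates from the first stage. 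Either way the analytic content is light once the variation bounds are in hand; the care needed is purely in matching hypotheses to the cited Young-integral results.
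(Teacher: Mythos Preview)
Your proposal is correct and follows essentially the same route as the paper: establish existence, uniqueness, and the variation bounds by the argument of Lemma~\ref{lem:costate}, then split $\dd\zeta$ and $\dd\lambda$ into their absolutely continuous and $a$- (resp.\ $b$-) parts, observe that the two Lebesgue contributions $\pm\int_I \lambda M \zeta\,\dd t$ cancel, and apply the Young integration-by-parts formula of Lemma~\ref{lem:youngproperties}(b). The paper simply calls the decomposition step ``a direct calculation'' and does not take your alternative approximation route; your worries about justifying linearity in the integrator and agreement with the Lebesgue integral for absolutely continuous integrators are legitimate but standard, and the paper does not spell them out.
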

\begin{proof}
The uniqueness of $\zeta$ and $\lambda$ as well as the claims about $\var{\zeta}_p$ and $\var{\lambda}_q$ follow as in the proof of Lemma~\ref{lem:costate}.
A direct calculation gives
\beqn{equ:3.100}
\int_I \lambda(t) \: \dd \zeta(t)
= \int_I \lambda(t) M(t) \zeta(t) \: \dd t + \int_I \lambda(t) \: \dd a(t)
\eeq
and similarly
\beqn{equ:3.110}
\int_I \zeta(t) \: \dd \lambda(t)
= -\int_I \lambda(t) M(t) \zeta(t) \: \dd t + \int_I \zeta(t) \: \dd b(t).
\eeq
Adding these two relations gives
\beqn{equ:3.120}
\int_I \zeta(t) \: \dd \lambda(t) + \int_I \lambda(t) \: \dd \zeta(t)
= \int_I \zeta(t) \: \dd b(t) + \int_I \lambda(t) \: \dd a(t),
\eeq
and integrating by parts on the left hand side gives the duality relation.
\end{proof}
\section{Regularity of controls}
\label{sec:regularitycontrols}
To investigate the regularity of controls, we assume that for every $(t, z, \mu) \in I \times E \times E'$ the function $H(t, z, \mu, .)$ has a unique minimiser, that is, there is a function $\upsilon: I \times E \times E' \to U$ so that
\[
H(t, z, \mu, \upsilon(t, z, \mu)) = \inf_{u \in U} H(t, z, \mu, u). 
\]
By the characterisation~\eqref{equ:maxcondition} of optimal controls, regularity properties of $\upsilon$ can be translated into regularity properties of optimal controls. 
Since the regularity of $\upsilon$ is determined by the Hamiltonian $H$ which neither contains the stochastic running costs nor the observations, we can rely on classical results, the proof of which we will only sketch.
\begin{definition}
The deterministic running costs are {\em strongly convex} in $u$ if for every bounded subset $C \subset I \times E \times U$ there is a $c > 0$ so that whenever $(t, z, w_1)$ and $(t, z, w_2)$ are in $C$, we have
\beqn{equ:4.10}
\left( \DD_3 \phi(t, z, w_1) - \DD_3 \phi(t, z, w_2)\right)
    (w_1 - w_2)
\geq c \abs{w_1 - w_2}^2.
\eeq
Note that $H$ is strongly convex if $\psi$ is convex, meaning in particular that $\upsilon$ is well defined in this case (see also~\cite{clarke_functional_analysis_2013}).
\end{definition}
\begin{theorem}[Regularity of controls]
\label{thm:controlregularity}
Suppose Hypothesis~\ref{hyp:hypothesisII} is in force.
\begin{enumerate}
\item If $\upsilon$ is well defined, then an optimal control has a continuous modification.
\item If $\psi$ is strongly convex, and $\DD_3 \phi$ as well as $g$ are locally Lipschitz in $(t, x)$, then an optimal control has a continuous modification that has bounded $q$--variation for any $q > 2$.
\end{enumerate}
\end{theorem}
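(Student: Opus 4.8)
The plan is to exploit the maximum principle~\eqref{equ:maxcondition}, which identifies the optimal control at almost every $t$ with $\upsilon(t, x(t), \lambda(t))$, and then to transfer regularity of the data $x$ and $\lambda$ through regularity of the selector $\upsilon$. For part~(a): since $\upsilon$ is well defined, the map $H(t, z, \mu, \cdot)$ on the closed convex set $U$ has a unique minimiser, and the standard argument (using the continuity of $\DD_3 H = \DD_3 \phi$ in all variables from Hypothesis~\ref{hyp:hypothesisII}\eqref{hyp:thmaxdetcost}, together with the uniqueness of the argmin and a compactness/contradiction argument on bounded sets) shows that $\upsilon$ is continuous as a function of $(t, z, \mu)$. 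Note $x$ is absolutely continuous, hence continuous, and by Lemma~\ref{lem:costate} the costate $\lambda$ is continuous as well. Therefore $t \mapsto \upsilon(t, x(t), \lambda(t))$ is continuous, and since it agrees with $u(t)$ for almost all $t$ by~\eqref{equ:maxcondition}, this is the desired continuous modification.

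For part~(b): the extra hypotheses — strong convexity of $\psi$ (hence of $H$) and local Lipschitz dependence of $\DD_3\phi$ and $g$ on $(t,x)$ — are precisely what is needed to upgrade continuity of $\upsilon$ to a quantitative modulus. Strong convexity gives, on bounded sets, a coercivity estimate of the form $c\abs{w_1 - w_2}^2 \le (\DD_3 H(t, z, \mu, w_1) - \DD_3 H(t, z, \mu, w_2))(w_1 - w_2)$, and the variational inequality characterising the constrained minimiser on the convex set $U$ then yields that $\upsilon$ is locally Lipschitz in $(z, \mu)$ and locally Lipschitz in $t$ (the $t$ and $x$ dependence of $H$ entering only through $\phi$, $f$, $g$, which are Lipschitz in $(t,x)$ on bounded sets by assumption). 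Feeding in the regularity of the arguments — $x$ is Lipschitz on $I$ (being absolutely continuous with $\dot x \in L^r$... actually only $W^{1,r}$; but $x$ has bounded $1$-variation, hence bounded $q$-variation for every $q \ge 1$ by Lemma~\ref{lem:varproperties}(a,b)), while $\lambda$ has bounded $q$-variation for every $q > 2$ by Lemma~\ref{lem:costate} — and composing with the locally Lipschitz $\upsilon$ (which preserves $q$-variation on the bounded range of $(x,\lambda)$), we conclude $t \mapsto \upsilon(t, x(t), \lambda(t))$ has bounded $q$-variation for any $q > 2$. This function is the continuous modification of $u$.

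The main obstacle I anticipate is making the local Lipschitz property of the constrained selector $\upsilon$ precise: because $U$ is merely closed and convex (not open), one cannot simply invoke the implicit function theorem on $\DD_3 H = 0$; instead one must work with the first-order optimality condition in the form $(\DD_3 H(t,z,\mu,\upsilon(t,z,\mu)))(w - \upsilon(t,z,\mu)) \ge 0$ for all $w \in U$, subtract the inequalities at two nearby parameter values, test each against the other's minimiser, add, and use strong monotonicity of $\DD_3 H$ in $w$ together with the Lipschitz dependence of $\DD_3 H$ on $(t,z,\mu)$ to bound $\abs{\upsilon(t_1,z_1,\mu_1) - \upsilon(t_2,z_2,\mu_2)}$. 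This is a classical computation in the theory of variational inequalities and parametric optimisation, which is why the paper says it will only be sketched; the one point requiring a little care is confining everything to a fixed bounded set, which is legitimate since $(x(t), \lambda(t))$ ranges over a compact subset of $E \times E'$.
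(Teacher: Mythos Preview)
Your proposal is correct and follows essentially the same route as the paper: identify the optimal control almost everywhere with $\upsilon(t,x(t),\lambda(t))$ via the maximum principle, prove continuity of $\upsilon$ by a boundedness-plus-subsequence argument for part~(a), upgrade to local Lipschitz via the variational-inequality/strong-convexity computation you describe for part~(b), and then transfer the regularity of $x$ (finite $1$-variation) and $\lambda$ (finite $q$-variation for $q>2$, Lemma~\ref{lem:costate}) through $\upsilon$ using Lemma~\ref{lem:varproperties}(e). One small correction: for part~(a) the continuity of $\upsilon$ needs only continuity of $H$ itself together with uniqueness of the minimiser and local boundedness of $\upsilon$ (obtained from the coercivity lower bound on $\phi$), not continuity of any derivative---your reference to $\DD_3 H$ and Hypothesis~\ref{hyp:hypothesisII}\eqref{hyp:thmaxdetcost} is unnecessary there.
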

\begin{proof}
If $(x, u, \lambda)$ is an optimal triple for initial condition $\xi$, then by the definition of $\upsilon$ we must have
\[
u(t) = \upsilon(t, x(t), \lambda(t))
\]
for almost all $t \in I$.
Hence item~1 follows if we can show that $\upsilon$ is continuous, while establishing that $\upsilon$ is locally Lipschitz, together with Lemmas~\ref{lem:costate} and~\ref{lem:varproperties}(e) will prove item~2.
Our proof of these two properties of $\upsilon$ follows~\cite{clarke_functional_analysis_2013}, Theorem~23.17.
With $w$ be an arbitrary element of $U$, we have the estimate
\[
H(t, z, \mu, w) 
\geq H(t, z, \mu, \upsilon(t, z, \mu)) 
\geq C_{\phi} \abs{\upsilon(t, z, \mu)}^r - \abs{z}^{\delta} + \mu \cdot \left[f(t, z) + g(t, z) \upsilon(t, z, \mu)\right],
\]
and because $f, g$ are continuous, we can infer that $\upsilon$ is bounded on bounded sets.
Consider a sequence $\{(t_n, z_n, \mu_n), n \in \N\}$ so that $(t_n, z_n, \mu_n) \to (t, z, \mu)$.
Taking any subsequence, there is a subsubsequence so that $\upsilon(t_n, z_n, \mu_n) \to w$ because this sequence is bounded.
Then by definition of $\upsilon$
\[
H(t_n, z_n, \mu_n, \upsilon(t_n, z_n, \mu_n))
\leq  H(t_n, z_n, \mu_n, \upsilon(t, z, \mu))
\]
and taking limits we obtain
\[
H(t, z, \mu, w)
\leq  H(t, z, \mu, \upsilon(t, z, \mu))
\]
whence $w = \upsilon(t, z, \mu)$, and we can conclude that $\upsilon$ is continuous.
To prove Theorem~\ref{thm:controlregularity}(b), consider $(t, z_1, \mu_1)$ and $(s, z_2, \mu_2)$ in $I \times E \times E'$ and use the shorthands $w_1 := \upsilon(t, z_1, \mu_1)$ and $w_2 := \upsilon(s, z_2, \mu_2)$.
Note that due to the convexity of $H$ in $u$ and the defintion of the function $\upsilon$ we must have
\[
\DD_4 H(t, z_1, \mu_1, w_1) (w_2 - w_1) \geq 0 
\quad \text{and} \quad
\DD_4 H(s, z_2, \mu_2, w_2) (w_1 - w_2) \geq 0. 
\]
The strong convexity condition implies
\beqn{equ:4.20}
\begin{split}
c \abs{w_1 - w_2}^2 
& \leq \left( \DD_4 H(t, z_1, \mu_1, w_1) 
    - \DD_4 H(t, z_1, \mu_1, w_2)\right)(w_1 - w_2)\\
& = \left( \DD_4 H(t, z_1, \mu_1, w_1) 
    - \DD_4 H(s, z_2, \mu_2, w_2) \right.\\
& \quad \left. + \DD_4 H(s, z_2, \mu_2, w_2)
    - \DD_4 H(t, z_1, \mu_1, w_2)\right)(w_1 - w_2)\\
& \leq \left( \DD_4 H(s, z_2, \mu_2, w_2)
    - \DD_4 H(t, z_1, \mu_1, w_2)\right)(w_1 - w_2)\\
& \leq L \left( \abs{s - t} + \abs{z_2 - z_1} + \abs{\mu_2 - \mu_1}\right) \abs{w_1 - w_2}.
\end{split}
\eeq
Hence, $\upsilon$ is Lipschitz, and by Lemma~\ref{lem:varproperties}(e), together with the fact that $x$ is absolutely continuous and that $\lambda$ has $q$--variation for $q > 2$, we can conclude that the control also has $q$--variation for $q > 2$.
\end{proof}
\section{The value function}
\label{sec:uniqueness}
In classical optimal control, there are well known relationships between the (generalised) derivative of the value function and the costate, permitting one to study the uniqueness of optimal controls, among other things~(see for instance~\cite{frankowska_singularities_value_function_2005}).
In this section, we will present a few results in this direction. 
Crucial to this analysis is the fact that if $(x, u, \lambda)$ is an optimal triple for some $\xi \in E$, then $(x, \lambda)$ satisfy a Hamiltonian ODE (Eq.~\eqref{equ:hamiltoniansystem} in Thm.~\ref{thm:hamiltoniansystem} below), which is obtained by using the minimum condition~\eqref{equ:maxcondition} to eliminate the control $u$ from the state and costate equation.
Further, it is required that solutions to Equation~\eqref{equ:hamiltoniansystem} are unique with respect to the initial condition $(x(0), \lambda(0))$.
Currently, we are only able to prove this under the assumption that the function $(t, z) \to \DD_2 \psi(t, z)$ has continuous partial derivatives with respect to both arguments which are Lipschitz in $z$.
We conjecture though that weaker conditions might be sufficient; this will be investigated in a future paper. 
Given our incomplete understanding of these equations at this point, we have to impose these properties as assumptions, and pending a better understanding, the results in this section have to be regarded as preliminary. 
Throughout this section, we fix a $\xi_0 \in E$ and assume there is an admissible control for $\xi_0$.
Further we impose
\begin{hypothesis}
\label{hyp:hypothesisIII}
Hypothesis~\ref{hyp:hypothesisII} and 
\begin{enumerate}
\item The mapping $\upsilon$ defined in Section~\ref{sec:regularitycontrols} is well defined.
\item There is a neighbourhood $X \subset E$ of $\xi_0$ so that Hypothesis~\ref{hyp:hypothesisI}(e) is valid for all $\xi \in X$.
\end{enumerate}
\end{hypothesis}
We first show that the necessary conditions in
Theorem~\ref{thm:maximumprinciple} can be recast as a Hamiltonian
system with stochastic perturbation.
Consider the function
\[
m(t, z, \mu) 
:= \inf_{u \in U} H(t, z, \mu, u) 
= H(t, z, \mu, \upsilon(t, z, \mu))
\]
which is continuous.
From the fact that $\upsilon$ is well defined (i.e.\ $H$ has a unique minimiser in $u$) and is bounded on bounded sets, and because $H$ has continuous derivatives with respect to $z, \mu$, we can conclude that also $m$ has continuous derivatives with respect to $z, \mu$ given by
\[
\DD m(t, z, \mu) 
= \DD H(t, z, \mu, u) |_{u = \upsilon(t, z, \mu)}
\]
where $\DD$ is the partial derivative with respect to $z$ or $\mu$.
The proof of this well known fact is omitted.
We immediately obtain the following theorem:
\begin{theorem}[Hamiltonian system]
\label{thm:hamiltoniansystem}
Let $(x, u, \lambda)$ be an optimal triple for initial condition $\xi$.
Then the state and costate $x, \lambda$ satisfy the equation
\beq{equ:hamiltoniansystem}
\begin{split}
\dot{x}(t) & = \DD_3 m(t, x(t), \lambda(t))\\
\lambda(t) & = \int_t^T \DD_2 m(s, x(s), \lambda(s)) \: \dd s 
    + \int_t^T \DD_2 \psi(s, x(s)) \: \dd \eta_s\\
x(0) & = \xi.
\end{split}
\eeq
\end{theorem}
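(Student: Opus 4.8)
The plan is to derive the Hamiltonian system~\eqref{equ:hamiltoniansystem} directly from the maximum principle (Theorem~\ref{thm:maximumprinciple}) by substituting the optimal feedback $u(t) = \upsilon(t, x(t), \lambda(t))$ into the state and costate equations and then recognising the resulting right-hand sides as partial derivatives of $m$. First I would record that by Theorem~\ref{thm:maximumprinciple} an optimal triple $(x, u, \lambda)$ satisfies the minimum condition $H(t, x(t), \lambda(t), u(t)) = \inf_{v \in U} H(t, x(t), \lambda(t), v)$ for almost every $t$. Since $\upsilon$ is well defined by Hypothesis~\ref{hyp:hypothesisIII}, this forces $u(t) = \upsilon(t, x(t), \lambda(t))$ for a.a.\ $t$, and by Theorem~\ref{thm:controlregularity}(a) we may take $u$ to be the continuous modification $t \mapsto \upsilon(t, x(t), \lambda(t))$ (both $x$ and $\lambda$ being continuous, the latter by Lemma~\ref{lem:costate}).

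Next I would invoke the envelope-type identity stated just above the theorem, namely $\DD m(t, z, \mu) = \DD H(t, z, \mu, u)\big|_{u = \upsilon(t, z, \mu)}$ for $\DD = \DD_2$ or $\DD_3$. Writing out $H(t, z, \mu, v) = \phi(t, z, v) + \mu(f(t, z) + g(t, z) v)$, one has $\DD_3 H(t, z, \mu, v) = f(t, z) + g(t, z) v$ (here $\DD_3$ is the derivative in the $\mu$-slot, which acts as the identity on the pairing), so evaluating at $v = \upsilon(t, z, \mu)$ gives $\DD_3 m(t, z, \mu) = f(t, z) + g(t, z)\upsilon(t, z, \mu)$. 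Substituting $z = x(t)$, $\mu = \lambda(t)$ and using the state equation~\eqref{equ:constraintode} together with $u(t) = \upsilon(t, x(t), \lambda(t))$ yields $\dot{x}(t) = f(t, x(t)) + g(t, x(t)) u(t) = \DD_3 m(t, x(t), \lambda(t))$, which is the first line of~\eqref{equ:hamiltoniansystem}. Similarly, $\DD_2 H(t, z, \mu, v) = \DD_2 \phi(t, z, v) + \mu(\DD_2 f(t, z) + \DD_2 g(t, z) v)$, which by the envelope identity equals $\DD_2 m(t, z, \mu)$ after evaluation at $v = \upsilon(t, z, \mu)$; recognising the integrand $\lambda(s)\{\DD_2 f(s, x(s)) + \DD_2 g(s, x(s)) u(s)\} + \DD_2 \phi(s, x(s), u(s))$ in the costate equation~\eqref{equ:costate} as exactly $\DD_2 m(s, x(s), \lambda(s))$ converts~\eqref{equ:costate} into the second line of~\eqref{equ:hamiltoniansystem}. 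The initial condition $x(0) = \xi$ is inherited unchanged.

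There is essentially no analytic obstacle here: all the hard work (existence of $\lambda$ with finite $q$-variation, well-definedness of the Young integral $\int_t^T \DD_2 \psi(s, x(s)) \: \dd \eta_s$, the envelope formula for $\DD m$, and continuity of the feedback $\upsilon$) has already been established in Theorem~\ref{thm:maximumprinciple}, Lemma~\ref{lem:costate}, the discussion preceding the theorem, and Theorem~\ref{thm:controlregularity}. The only point requiring a word of care is that the identification $u(t) = \upsilon(t, x(t), \lambda(t))$ holds only almost everywhere, so the substitution into the costate integrals is justified because the integrands agree a.e.\ and both are integrable (for the $\dd s$ integral, by Hypothesis~\ref{hyp:hypothesisII}\eqref{hyp:thmaxinteg} and continuity of $f, g, \DD_2 f, \DD_2 g$) and have finite $q$-variation for $q < 2$ in the case of the $\dd \eta$ integral (by Hypothesis~\ref{hyp:hypothesisII}\eqref{hyp:thmaxhoel} and the chain rule, Lemma~\ref{lem:varproperties}(e)), so the Young integral is unchanged under modification on a null set. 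Hence the proof is a short verification; I would present it as such, noting that it is "immediate" precisely because the theorem was flagged as following immediately from the preceding remarks.

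\begin{proof}
By Theorem~\ref{thm:maximumprinciple}, an optimal triple $(x, u, \lambda)$ satisfies the minimum condition~\eqref{equ:maxcondition} for almost all $t \in I$; since $\upsilon$ is well defined by Hypothesis~\ref{hyp:hypothesisIII}, this means $u(t) = \upsilon(t, x(t), \lambda(t))$ for almost all $t$. From $H(t, z, \mu, v) = \phi(t, z, v) + \mu(f(t, z) + g(t, z) v)$ we compute $\DD_3 H(t, z, \mu, v) = f(t, z) + g(t, z) v$ and $\DD_2 H(t, z, \mu, v) = \DD_2 \phi(t, z, v) + \mu(\DD_2 f(t, z) + \DD_2 g(t, z) v)$. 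Inserting $v = \upsilon(t, z, \mu)$ and using the envelope identity $\DD m(t, z, \mu) = \DD H(t, z, \mu, u)|_{u = \upsilon(t, z, \mu)}$ recorded above yields
\[
\DD_3 m(t, z, \mu) = f(t, z) + g(t, z)\upsilon(t, z, \mu), \qquad
\DD_2 m(t, z, \mu) = \DD_2 \phi(t, z, \upsilon(t, z, \mu)) + \mu\big(\DD_2 f(t, z) + \DD_2 g(t, z)\upsilon(t, z, \mu)\big).
\]
Evaluating the first identity at $(t, x(t), \lambda(t))$ and using the state equation~\eqref{equ:constraintode} with $u(t) = \upsilon(t, x(t), \lambda(t))$ gives $\dot{x}(t) = \DD_3 m(t, x(t), \lambda(t))$ for a.a.\ $t$. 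Evaluating the second identity at $(s, x(s), \lambda(s))$ shows that the integrand $\lambda(s)\{\DD_2 f(s, x(s)) + \DD_2 g(s, x(s)) u(s)\} + \DD_2 \phi(s, x(s), u(s))$ appearing in the costate equation~\eqref{equ:costate} equals $\DD_2 m(s, x(s), \lambda(s))$ for a.a.\ $s$; since this integrand is integrable (by Hypothesis~\ref{hyp:hypothesisII}\eqref{hyp:thmaxinteg}) and $\DD_2 \psi(s, x(s))$ has finite $q$-variation for some $q < 2$ (Hypothesis~\ref{hyp:hypothesisII}\eqref{hyp:thmaxhoel} and Lemma~\ref{lem:varproperties}(e)), modifying the integrand on a null set leaves the integrals in~\eqref{equ:costate} unchanged, and~\eqref{equ:costate} becomes
\[
\lambda(t) = \int_t^T \DD_2 m(s, x(s), \lambda(s)) \: \dd s + \int_t^T \DD_2 \psi(s, x(s)) \: \dd \eta_s.
\]
Together with $x(0) = \xi$ this is precisely~\eqref{equ:hamiltoniansystem}.
\end{proof}
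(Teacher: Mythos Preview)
Your proof is correct and follows exactly the approach the paper intends: the paper states the theorem as an immediate consequence of the envelope identity $\DD m(t,z,\mu)=\DD H(t,z,\mu,u)|_{u=\upsilon(t,z,\mu)}$ together with the maximum principle, and you have simply spelled out this verification. There is nothing to add.
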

To define the value function, one usually extends the Problem~VAR to a class of problems by asking for optimal controls on the interval $[s, T]$ with $s \geq 0$ and initial condition $x(s) = \xi$, but we will not do this here.
To any $u \in \mathcal{U}_r$ and any $\xi \in X$ there corresponds a unique absolutely continuous function $x:I \to E$ so that $(x, u)$ is a feasible process with respect to $\xi$.
The costs of this process can be regarded as a function $J(\xi, u)$ on $X \times \mathcal{U}_r$.
Note that $J$ is well defined but might be infinite.
\begin{definition}
\label{def:valuefunc}
On $X$ we define the {\em value function} as 
\beqn{equ:valuefunc}
V(\xi) = \inf_{u \in \mathcal{U}_r} J(\xi, u),
\eeq
with $V(\xi) = \infty $ if there is no admissible control for $x$.
\end{definition}
Regarding $J$ and the value function, we have the following 
\begin{lemma}
\begin{enumerate}
\item
There exists a neighbourhood $X_0 \subset X$ so that $J(., u)$ is bounded on $X_0$, where $u$ is the admissible control for $\xi_0$.
\item 
If $\xi \in X_0$ and $u \in \mathcal{U}_r$ is admissible for $\xi$, then $J(., u)$ has a partial derivative at $\xi$, and 
\beqn{equ:meaningofp}
\DD_1 J(\xi, u) = \lambda(0)
\eeq
where $\lambda$ is given by Equation~\eqref{equ:costate}. 
\item The value function is Lipschitz continuous on bounded subsets of $X_0$.
\end{enumerate}
\end{lemma}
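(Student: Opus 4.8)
The plan is to prove the three assertions in turn, each building on the previous one and on the machinery already developed (the existence theorem, the maximum principle, and the Duality Lemma).

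\textbf{Part (a).} First I would use Theorem~\ref{thm:existence} together with Hypothesis~\ref{hyp:hypothesisIII}: since Hypothesis~\ref{hyp:hypothesisI}(e) holds uniformly for $\xi$ in the neighbourhood $X$, the constants $C_s, C_e, \beta, \gamma$ (and hence the lower bound~\eqref{equ:fulllowerboundA} derived in the proof of Theorem~\ref{thm:existence}) can be taken independent of $\xi \in X$. Fix the admissible control $u$ for $\xi_0$. The feasible process $(x, u)$ with respect to a nearby $\xi$ is obtained by solving the state equation with the same $u$; by continuous dependence on initial conditions (using Hypothesis~\ref{hyp:hypothesisII}(a)) the solutions $x$ stay uniformly bounded in $\norm{\cdot}_\infty$ for $\xi$ in a small enough ball $X_0 \subset X$. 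Then I would bound $J(\xi, u)$ directly: the deterministic cost $\int_I \phi(t, x(t), u(t))\,\dd t$ is finite because $\phi$ is continuous, $x$ is uniformly bounded, and $u \in \mathcal{U}_r$ is fixed; and the stochastic cost is controlled by Lemma~\ref{lem:detailedestimate} (or directly by Lemma~\ref{lem:youngproperties} applied to $\var{\psi(\cdot, x(\cdot))}_q$ and $\var{\eta}_{\hat p}$) with a bound uniform over $X_0$ since $\norm{u}_r$ is fixed. Hence $J(\cdot, u)$ is bounded on $X_0$.

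\textbf{Part (b).} This is the heart of the lemma. Fix $\xi \in X_0$ and $u \in \mathcal{U}_r$ admissible for $\xi$. For $\zeta_0 \in E$ small, let $x_{\zeta_0}$ solve the state equation with control $u$ and initial condition $\xi + \zeta_0$. I would Taylor-expand $x_{\zeta_0}$ to first order: writing $\xi_{\zeta_0} := x_{\zeta_0} - x$, one has $\xi_{\zeta_0} = \eta_{\zeta_0} + o(|\zeta_0|)$ in $\norm{\cdot}_\infty$, where $\eta_{\zeta_0}$ solves the linear variational equation $\dot{\eta}_{\zeta_0}(t) = \DD_2 h(t, x(t), u(t))\,\eta_{\zeta_0}(t)$ with $\eta_{\zeta_0}(0) = \zeta_0$ (this is the same estimate used in the proof of Theorem~\ref{thm:maximumprinciple}, with a perturbation in the initial condition rather than a needle variation in the control). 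Expanding the cost functional and using Hypothesis~\ref{hyp:hypothesisII}(b,c) for the deterministic part and Lemma~\ref{lem:stochcostvariation}-style arguments (Lemma~\ref{lem:youngcontinuity} plus a Taylor expansion of $\psi$) for the stochastic part, I get
\[
J(\xi + \zeta_0, u) - J(\xi, u) = \int_I \DD_2 \phi(t, x(t), u(t)) \eta_{\zeta_0}(t)\,\dd t + \int_I \DD_2 \psi(t, x(t)) \eta_{\zeta_0}(t)\,\dd \eta_t + o(|\zeta_0|).
\]
Now I would apply the Duality Lemma (Lemma~\ref{lem:duality}) with $a(t) \equiv \zeta_0$ (constant, so $\var{a}_p = 0$), $M(t) = \DD_2 h(t, x(t), u(t))$, and $b(t) = \int_t^T \DD_2\phi(s, x(s), u(s))\,\dd s + \int_t^T \DD_2\psi(s, x(s))\,\dd\eta_s$, which is exactly the inhomogeneous term of the costate equation~\eqref{equ:costate}, so that $\lambda$ from~\eqref{equ:costate} is the corresponding solution with $\lambda(T) = 0$. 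The duality relation gives $0 - \lambda(0)\zeta_0 = \int_I \eta_{\zeta_0}(t)\,\dd b(t) + 0$; unwinding the definition of $b$ and integrating the $\dd b$ term against $\eta_{\zeta_0}$ recovers exactly $\int_I \DD_2\phi\,\eta_{\zeta_0}\,\dd t + \int_I \DD_2\psi\,\eta_{\zeta_0}\,\dd\eta$ (this needs a Fubini/integration-by-parts interchange, the same computation that appears in Lemma~\ref{lem:duality}'s proof). Hence that integral equals $-\lambda(0)\zeta_0$... so I need to be careful with signs: the upshot is $J(\xi+\zeta_0, u) - J(\xi,u) = \lambda(0)\zeta_0 + o(|\zeta_0|)$, i.e. $\DD_1 J(\xi, u) = \lambda(0)$. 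The main obstacle here is making the Taylor expansion of the stochastic cost rigorous under a perturbation of the initial condition — one must check that the relevant $q$-variations (of $\eta_{\zeta_0}$, of the remainder, and of the Taylor-coefficient integral $\int_0^1 \DD_2\psi(t, \tau x_{\zeta_0}(t) + (1-\tau)x(t))\,\dd\tau$) stay bounded as $\zeta_0 \to 0$, exactly as in Lemma~\ref{lem:stochcostvariation}; the argument is essentially a transcription of that lemma, replacing the needle variation by an initial-condition perturbation.

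\textbf{Part (c).} Lipschitz continuity of $V$ on bounded subsets of $X_0$ follows from the uniform coercivity estimate together with a standard infimum-of-Lipschitz-family argument. I would argue: for $\xi, \xi' \in X_0$ and any $\epsilon > 0$, pick a near-optimal admissible $u$ for $\xi$ (with $J(\xi, u) \le V(\xi) + \epsilon$); by the uniform lower bound~\eqref{equ:fulllowerboundA} valid over $X_0$, $\norm{u}_r$ is bounded by a constant depending only on $\sup_{X_0} V$ (finite by part (a)) and not on $\xi$. Solving the state equation with this $u$ and initial condition $\xi'$ gives a feasible process for $\xi'$ whose state differs from $x$ by $O(|\xi - \xi'|)$ uniformly (Gronwall, using continuity of $\DD_2 f, \DD_2 g$ and the uniform bound on $\norm{u}_r$ hence on $\norm{x}_\infty$). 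The cost difference $J(\xi', u) - J(\xi, u)$ is then $O(|\xi - \xi'|)$: the deterministic part by the mean value theorem and Hypothesis~\ref{hyp:hypothesisII}(b,c), the stochastic part by Lemma~\ref{lem:youngproperties} applied to the difference $\psi(\cdot, x'(\cdot)) - \psi(\cdot, x(\cdot))$, whose $\norm{\cdot}_\infty$ and $\var{\cdot}_q$ norms are $O(|\xi-\xi'|)$. Hence $V(\xi') \le V(\xi) + L|\xi - \xi'| + \epsilon$; letting $\epsilon \to 0$ and symmetrising gives the Lipschitz bound. The constant $L$ depends only on the bounds over $X_0$, so it is uniform on bounded subsets. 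I expect part (b), specifically the stochastic-cost Taylor expansion and the Fubini interchange inside the duality relation, to be the main technical obstacle; parts (a) and (c) are routine once the uniform coercivity over $X_0$ is in place.
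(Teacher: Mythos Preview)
Your plan for parts~(a) and~(b) follows the paper's argument closely. One caveat on~(a): the claim that the deterministic cost is bounded ``because $\phi$ is continuous, $x$ is uniformly bounded, and $u$ is fixed'' is not quite sufficient, since $u(t)$ may be unbounded in $t$ and continuity of $\phi$ alone yields no dominating function. The paper handles this by writing $J(\xi,u)-J(\xi_0,u)$ and bounding the deterministic difference via the mean value theorem together with Hypothesis~\ref{hyp:hypothesisII}\eqref{hyp:thmaxinteg} (in its strengthened form $|\DD_2\phi(t,z,u(t))|\le c'|\phi(t,x(t),u(t))|+d'(t)$ for $|z-x(t)|\le R$, cf.\ \cite{clarke_functional_analysis_2013}, proof of Thm.~22.17). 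You correctly invoke exactly this mechanism in your part~(c), so cite it in~(a) as well. A minor slip in~(b): in the Duality Lemma the equation for $\zeta$ forces $a(0)=0$, so the correct instantiation is $a\equiv 0$ with $\zeta(0)=\zeta_0$ rather than $a\equiv\zeta_0$; the conclusion and the sign then come out as you say.

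For part~(c) the two approaches genuinely diverge. The paper does not estimate $J(\xi',u)-J(\xi,u)$ directly; instead it uses part~(b) to identify $\DD_1 J(\xi,u_\xi)=\lambda_\xi(0)$ for an optimal triple $(x_\xi,u_\xi,\lambda_\xi)$ and then shows that $\lambda_\xi(0)$ is uniformly bounded over $X_0$ by estimating each of the three integrals in the costate equation~\eqref{equ:costate} separately (the uniform coercivity bound~\eqref{equ:fulllowerboundA} controls $\|u_\xi\|_r$, Hypothesis~\ref{hyp:hypothesisI}(e) then controls $\|x_\xi\|_\infty$ and $\|\dot x_\xi\|_r$, and Hypothesis~\ref{hyp:hypothesisII}(a,c,d) bound the three integrals). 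Your route bypasses the costate entirely and is more elementary; the paper's route ties the Lipschitz constant directly to a bound on the initial costate, which is precisely what is used downstream in Theorem~\ref{thm:uniqueness}. Both arguments are valid.
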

\begin{proof}
Let $(x, u)$ be the admissible process for $\xi_0$.
Then due to Hypothesis~\ref{hyp:hypothesisIII}(b), there is a neighbourhood $X_0 \subset X$ of $\xi_0$ so that any solution $(y, u)$ of the state equation with control $u$ and initial condition $\xi \in X_0$ satisfies the bound $\norm{y - x}_{\infty} \leq R$, where $R$ is as in Hypothesis~\ref{hyp:hypothesisII}(c).
Further, 
\beq{equ:5.10}
\begin{split}
J(\xi, u) & = J(\xi_0, u) \\
& \quad + \int_I \phi(t, y(t), u(t)) - \phi(t, x(t), u(t))\: \dd t \\
& \quad + \int_I \psi(t, y(t)) - \psi(t, x(t))\: \dd \eta_t.
\end{split}
\eeq
From the mean value theorem, we get
\[
|\phi(t, y(t), u(t)) - \phi(t, x(t), u(t))|
\leq |\DD_2\phi(t, \theta(t), u(t))| R
\]
for some function $\theta:I \to E$ with $\norm{\theta - x}_{\infty} \leq R$.
It can be shown (see~\cite{clarke_functional_analysis_2013}, proof of theorem~22.17) that Hypothesis~\ref{hyp:hypothesisII}(c) implies the apparently stronger statement that if $\abs{z - x(t)} \leq R$, then $|\DD_2\phi(t, z, u(t))| \leq c'|\phi(t, x(t), u(t))| + d'(t)$ almost surely, with $c' \geq 0$ and $d'$ integrable.
This shows that the first integral in Equation~\eqref{equ:5.10} is bounded. 
The stochastic costs are easily seen to be continuous with respect to varying the initial condition of the state equation, since this causes the trajectories to vary continuously in the uniform topology, while the 1-variation remains bounded.
Hence, all terms on the right hand side of Equation~\eqref{equ:5.10} remain bounded. 
To prove item~(b), let $\lambda$ be the unique solution of Equation~\eqref{equ:costate}.
We regard $x$ as a function of time and the initial condition $\xi$ and make this explicit by writing $x_{\xi}(t)$ for $t \in I$.
Evidently, $x_{\xi}(0) = \xi$.
For any $v \in E$ we have 
\beq{equ:derivativeJ}
\DD_1 J(\xi, u) v 
= \int_0^T \DD_2 \phi(t, x_{\xi}(t), u(t)) \, \zeta(t) \: \dd t 
+ \int_0^T \DD_2 \psi(t, x_{\xi}(t)) \, \zeta(t) \: \dd \eta_t
\eeq
where 
\[
\frac{\dd }{\dd t}\zeta(t) = \DD_2 \{f(t, x_{\xi}(t)) 
    + g(t, x_{\xi}(t))u(t) \} \zeta(t)
\qquad \zeta(0) = v.
\]
A proof of this runs similar to the proof of the claims~(\ref{equ:mxproof_1},\ref{equ:mxproof_2},\ref{equ:mxproof_3}).
Applying the Duality Lemma~\ref{lem:duality} gives that the right hand side of Equation~\eqref{equ:derivativeJ} is equal to $\trp{\lambda(0)} v$.
To prove item~(c), first note that by item~(a), there is a control $u$ that is admissible for any $\xi \in X_0$, and $J(., u)$ is bounded over $X_0$ by $J_0$, say.
This implies that for any $\xi \in X_0$ there exist an optimal triple $(x_{\xi}, u_{\xi}, \lambda_{\xi})$, and we need to show that $\lambda_{\xi}(0)$ is bounded for $\xi \in X_0$. 
The estimate~\eqref{equ:fulllowerboundA} shows that $\norm{u_{\xi}}_r$ is bounded over $\xi \in X_0$.
Going back to Hypothesis~\ref{hyp:hypothesisIII}(b) we see that $\norm{x_{\xi}}_{\infty}$ and $\norm{\dot{x}_{\xi}}_{r}$ are also bounded.
We remember that by the maximum principle (Thm.~\ref{thm:maximumprinciple}), we have
\begin{multline}
\label{equ:costate_again}
\lambda_{\xi}(0) = \int_{I} 
    \lambda(s) \big\{
            \DD_2 f(s, x_{\xi}(s)) 
            + \DD_2 g(s, x_{\xi}(s)) u_{\xi}(s)
        \big\} \: \dd s\\
    + \int_{I} 
        \DD_2 \phi(s, x_{\xi}(s), u_{\xi}(s)) \: \dd s
    + \int_{I} 
        \DD_2 \psi(s, x_{\xi}(s)) \: \dd \eta_s.
\end{multline}
The function 
$t \to \DD_2 f(s, x_{\xi}(s)) 
            + \DD_2 g(s, x_{\xi}(s)) u_{\xi}(s)$
is bounded in $\norm{.}_1$ thanks to Hypothesis~\ref{hyp:hypothesisII}(a) and the boundedness of $\norm{u_{\xi}}_r$.
The term 
$\int_{I} 
        \DD_2 \psi(s, x_{\xi}(s)) \: \dd \eta_s
$
is bounded due to Hypothesis~\ref{hyp:hypothesisII}(d), the boundedness of $\norm{x_{\xi}}_{\infty}$ and our estimates of the Young integral.
Hypothesis~\ref{hyp:hypothesisII}(c) provides the estimate
\beqn{equ:costate_secondpart}
\begin{split}
& \int_{I} \DD_2 
    \phi(s, x_{\xi}(s), u_{\xi}(s)) \: \dd s \\
& \leq \int_{I} 
    c|\phi(s, x_{\xi}(s), u_{\xi}(s))| \: \dd s + D\\
& \leq cJ_0 + c\, |\! \int_{I} 
    \psi(s, x_{\xi}(s)) \: \dd \eta_s| + D,
\end{split}
\eeq
and the integral is bounded again due to Hypothesis~\ref{hyp:hypothesisII}(d), the boundedness of $\norm{x_{\xi}}_{\infty}$ and our estimates of the Young integral.
\end{proof}
We will from now on restrict attention to $X_0$ which we rename $X$.
Since $V$ is locally Lipschitz, $\DD V$ exists on a dense set in $X$ by Rademacher's theorem.
The {\em reachable gradient} of $V$ at $\xi \in X$, denoted by $\partial^* V(\xi)$, is the set of all cluster points of $\DD V(\xi_n)$ when $\xi_n \to \xi$.
Note that $\partial^* V(\xi)$ is nonempty for all $\xi \in X$, and it is well known (see~\cite{frankowska_singularities_value_function_2005}, prop.~2) that $V$ is differentiable at $\xi$ if and only if $\partial^* V(\xi)$ is a singleton, in which case $\{\DD V(\xi)\} = \partial^* V(\xi)$.
\begin{theorem}
%
\label{thm:uniqueness}
\begin{enumerate}
\item Let $\xi \in X$ and suppose that for any control $u$ that is optimal with respect to $\xi$, the corresponding pair $(x, \lambda)$ is the unique solution of the Hamiltonian system~\eqref{equ:hamiltoniansystem} with respect to the initial condition $(\xi, \lambda(0))$.
Then if $V$ has a derivative at $\xi$, the optimal control $u$ for $\xi$ is unique (a.s.\ wrt Lebesgue measure), and $\DD V(\xi) = \lambda(0)$.
\item Assume that in addition to the conditions in the first item, solutions to the Hamiltonian system~\eqref{equ:hamiltoniansystem} depend continuously on initial conditions in the uniform topology. 
Then if $w \in \partial^{*}V(\xi)$, there exists an optimal state control pair $(x, u)$ with $\lambda(0) = w$.
In particular, if $u$ is a unique optimal control with respect to $\xi$, then $V$ has a derivative at $\xi$.
\end{enumerate}
\end{theorem}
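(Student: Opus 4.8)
The plan is to exploit the relationship $\DD_1 J(\xi, u) = \lambda(0)$ from the previous lemma, together with the fact that optimal controls are parametrised by their costate via the Hamiltonian system. For item~(1), suppose $V$ is differentiable at $\xi$ and let $u^{(1)}, u^{(2)}$ be two optimal controls with associated triples $(x^{(i)}, u^{(i)}, \lambda^{(i)})$. I would first show that each $J(\cdot, u^{(i)})$ touches $V$ from above at $\xi$: indeed $J(\xi, u^{(i)}) = V(\xi)$ by optimality, and $J(\eta, u^{(i)}) \geq V(\eta)$ for $\eta$ near $\xi$ since $u^{(i)}$ remains admissible on $X_0$ (using that $J(\cdot, u^{(i)})$ is finite and has a partial derivative at $\xi$ by the preceding lemma). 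Because $V$ is differentiable at $\xi$, the standard sandwich/subgradient argument forces $\DD_1 J(\xi, u^{(i)}) = \DD V(\xi)$, hence $\lambda^{(1)}(0) = \lambda^{(2)}(0) = \DD V(\xi)$. Now both pairs $(x^{(i)}, \lambda^{(i)})$ solve the Hamiltonian system~\eqref{equ:hamiltoniansystem}, both with initial data $x^{(i)}(0) = \xi$ and the \emph{same} $\lambda^{(i)}(0)$; by the assumed uniqueness of solutions to~\eqref{equ:hamiltoniansystem} with respect to $(\xi, \lambda(0))$, we get $(x^{(1)}, \lambda^{(1)}) = (x^{(2)}, \lambda^{(2)})$. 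Finally, the minimum condition~\eqref{equ:maxcondition} reads $u^{(i)}(t) = \upsilon(t, x^{(i)}(t), \lambda^{(i)}(t))$ for a.a.\ $t$ (using that $\upsilon$ is well defined under Hypothesis~\ref{hyp:hypothesisIII}), so the controls coincide almost everywhere. The identity $\DD V(\xi) = \lambda(0)$ is then just the equality $\DD V(\xi) = \DD_1 J(\xi, u)$ already established.

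For item~(2), let $w \in \partial^* V(\xi)$, so there is a sequence $\xi_n \to \xi$ with $V$ differentiable at $\xi_n$ and $\DD V(\xi_n) \to w$. For each $n$, pick an optimal control $u_n$ for $\xi_n$ (existence is guaranteed by Theorem~\ref{thm:existence} together with item~(a) of the preceding lemma, which supplies a uniformly admissible control over $X_0$), with optimal triple $(x_n, u_n, \lambda_n)$. By the differentiability of $V$ at $\xi_n$ and item~(1) applied at $\xi_n$, we have $\lambda_n(0) = \DD V(\xi_n) \to w$. As in the proof of item~(c) of the preceding lemma, the estimate~\eqref{equ:fulllowerboundA} bounds $\norm{u_n}_r$ uniformly (uniform admissibility plus boundedness of $J(\cdot, u)$ over $X_0$), hence Hypothesis~\ref{hyp:hypothesisIII}(b) bounds $\norm{x_n}_\infty$ and $\norm{\dot x_n}_r$; by the same arguments as in Lemma~\ref{lem:costate} the $\lambda_n$ have $q$-variation ($q>2$) bounded uniformly, and their initial values $\lambda_n(0)$ are bounded. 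I would then pass to a subsequence: by Arzel\`a--Ascoli (equicontinuity of $x_n$) $x_n \to x$ uniformly, and $\lambda_n(0) \to w$ with $(\xi_n,\lambda_n(0)) \to (\xi, w)$. Invoking the assumed continuous dependence of solutions of the Hamiltonian system~\eqref{equ:hamiltoniansystem} on the initial data $(x(0),\lambda(0))$, the limiting pair $(x, \lambda)$ is \emph{the} solution of~\eqref{equ:hamiltoniansystem} with initial condition $(\xi, w)$, so in particular $\lambda(0) = w$. Setting $u(t) := \upsilon(t, x(t), \lambda(t))$ produces a control; one checks $(x, u)$ solves the state equation (this is built into the first line of~\eqref{equ:hamiltoniansystem} via $\DD_3 m = \DD_3 H|_{u=\upsilon}$) and that it is optimal — e.g.\ by showing $u_n \to u$ in an appropriate sense (using uniform convergence of $x_n, \lambda_n$ and continuity of $\upsilon$) and passing to the limit in $J(\xi_n, u_n) = V(\xi_n)$ using lower semicontinuity of the costs together with the Lipschitz continuity of $V$. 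For the last assertion: if $u$ is the unique optimal control at $\xi$, then every $w \in \partial^* V(\xi)$ equals $\lambda(0)$ for \emph{that} triple (since the optimal triple, hence $\lambda(0)$, is uniquely determined), so $\partial^* V(\xi)$ is a singleton, which by the cited characterisation means $V$ is differentiable at $\xi$.

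The main obstacle I anticipate is the compactness-and-limit step in item~(2): one must extract a convergent subsequence of the $(x_n, \lambda_n)$ and — crucially — verify that the limit \emph{is} the solution of the Hamiltonian system with the limiting initial data, which is exactly where the (assumed) continuous dependence on initial conditions is used, and then argue that the resulting state-control pair is genuinely optimal rather than merely feasible. The optimality at the limit needs the lower semicontinuity of the deterministic costs (as in the proof of Theorem~\ref{thm:existence}), the Young-integral continuity Lemma~\ref{lem:youngcontinuity} for the stochastic costs, and the Lipschitz bound on $V$ to sandwich $V(\xi)$ between $\liminf V(\xi_n)$ and the cost of the candidate pair. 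A secondary technical point is justifying that $J(\cdot, u^{(i)})$ genuinely lies above $V$ in a full neighbourhood of $\xi$ (so that the differentiability-of-$V$ sandwich applies); this uses precisely item~(a) of the preceding lemma, guaranteeing that the relevant controls stay admissible on all of $X_0$.
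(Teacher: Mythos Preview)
Your proposal is correct and follows essentially the same route as the paper: for item~(a) both you and the paper use the sandwich $V \leq J(\cdot, u^{(i)})$ with equality at $\xi$ to force $\lambda^{(i)}(0) = \DD V(\xi)$, then invoke uniqueness for~\eqref{equ:hamiltoniansystem} and recover $u$ via $\upsilon$; for item~(b) both take $\xi_n \to \xi$ with $\DD V(\xi_n) \to w$, use continuous dependence on initial data to pass $(x_n,\lambda_n)$ to the solution with data $(\xi,w)$, set $u = \upsilon(\cdot, x, \lambda)$, and verify optimality by the lower-semicontinuity arguments from Theorem~\ref{thm:existence}. The only difference is that your Arzel\`a--Ascoli/compactness step is redundant, since the assumed continuous dependence already delivers convergence of the full sequence $(x_n,\lambda_n)$ directly.
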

\begin{proof}
Let $u_1$ and $u_2$ be optimal controls, with $(x_1, u_1, \lambda_1)$ and $(x_2, u_2, \lambda_2)$ being the corresponding optimal triples.
Note that $x_1(0) = x_2(0) = \xi$, and both $(x_1, \lambda_1)$ and $(x_2, \lambda_2)$ are solutions of the Hamiltonian system~\eqref{equ:hamiltoniansystem}.
Since $V(z) \leq J(z, u_1)$ for $z \in X$, with equality if $z = \xi$, we can conclude that $\DD V(\xi) = \DD_1 J(\xi, u_1)$, in case $V$ has a derivative at $\xi$.
Since the same is true for $u_2$, we see that $\lambda_1(0) = \lambda_2(0) = \DD V(\xi)$ and therefore $(x_1, \lambda_1) = (x_2, \lambda_2)$ because the Hamiltonian system~\eqref{equ:hamiltoniansystem} has unique solutions with respect to initial conditions.
Therefore $u_1(t) = \upsilon(x_1(t), \lambda_1(t)) = \upsilon(x_2(t), \lambda_2(t)) = u_2(t)$, and item~(a) is proved.
To prove item~(b), let $w \in \partial^*V(\xi)$.
Take a sequence $\xi_k \in X, k \in \N$ with $\xi_k \to \xi$ and so that $V$ is differentiable at $\xi_k$; letting $\lambda_k(0) = \DD V(\xi_k)$, we also assume $\lambda_k(0) \to w$.
This is possible owing to the definition of $\partial^*V(\xi)$.
For each $k \in \N$, let $(x_k, \lambda_k)$ be the solution to the Hamiltonian system~\eqref{equ:hamiltoniansystem} with initial condition $(\xi_k, \lambda_k(0))$.
By assumption, $(x_k, \lambda_k) \to (x, \lambda)$ uniformly, where $(x, \lambda)$ is the solution to the Hamiltonian system~\eqref{equ:hamiltoniansystem} with initial condition $(\xi, w)$.
Putting $u_k(t) = \upsilon(x_k(t), \lambda_k(t)), t \in I$, we see that $u_k$ is the unique optimal control for $\xi_k$ for each $k \in \N$.
Now $u_k$ converges uniformly to some control $u$, and it follows as in the proof of Theorem~\ref{thm:existence} that $u$ is optimal for $\xi$. 
The remainder of item~(b) follows from the remark just before the Theorem and the fact that if $u$ is the only optimal control for $\xi$, then $\partial^*V(\xi)$ must be singleton.
\end{proof}
\section{Motivating example from geophysical fluid dynamics}
\label{sec:motivatingexample}
We consider a class of examples with a state equation of the form
\beq{equ:examplestate}
\dot{x}(t) = f_1(t, x(t)) + f_2(x(t)) + g(t, x(t)) u(t)
\eeq
on the interval $I = [0, T]$ with initial condition $x(0) = \xi \in E$.
We use $E = \R^n$ for some $n$ and use the standard scalar product and norm on $E$.
We impose the conditions
\begin{enumerate}
\item \label{hyp:exmplphys}
$f_1$ is continuous and has a bounded and continuous derivative with respect to $x$.
\item \label{hyp:exmpladvec}
$f_2$ is a bilinear form with the property $\trp{x} f_2(x) = 0$.
\item \label{hyp:exmplg}
$g$ is bounded and continuous and has a bounded and continuous derivative with respect to $x$.
\setcounter{itemfreeze}{\value{enumi}}
\end{enumerate}
In the context of geophysical fluid dynamics, $f_1$ represents viscosity, coriolis forces and other physical effects.
The bilinear term $f_2$ is a kinematic term inherited from the advection term (or ``material acceleration'') in the Navier Stokes equations.
The function $g$ need not have any physical interpretation but might be present to scale the control $u$ or enforce balance conditions.
We also assume the presence of a closed and convex control set $U \subset E$ (which might be equal to $E$); again, this set might represent balance conditions. 
The class of systems described by the conditions (a-c) above contains various conceptual weather and climate models, such as Lorenz'63, Lorenz'96, and truncation approximations to the Navier~Stokes and Barotropic~Vorticity models. 
Of course, in the original form of these models, there is no $g$, which is an additional component coming in with the data assimilation.
We will now demonstrate that the conditions (a-c) above (plus further conditions on the running costs discussed below) imply Hypotheses~\ref{hyp:hypothesisI} and~\ref{hyp:hypothesisII}.
By multiplying the state equation~\eqref{equ:examplestate} with $\trp{x}$ and integrating from $0$ to $t$ we obtain
\[
\begin{split}
\frac{1}{2} x^2(t) 
 & = \frac{1}{2} \xi^2 
    + \int_0^t \trp{x(s)} f_1(s, x(s))  \: \dd s
    + \int_0^t \trp{x(s)} g(s, x)u(s)  \: \dd s\\
 & \leq C_1
    + C_2\int_0^t \abs{x(s)}^2 \: \dd s
    + C_3\int_0^t \abs{x(s)}\abs{u(s)}  \: \dd s\\
 & \leq C'_1
    + C'_2\int_0^t \abs{x(s)}^2 \: \dd s
    + C'_3\int_0^t \abs{u(s)}^2 \: \dd s
\end{split}    
\]
where we have used first the property of $f_2$, next the properties of $f_1$ and $g$, and finally Young's inequality.
Applying Gr\"{o}nwall's inequality, we obtain the energy estimate~\eqref{hyp:thexenergy} with $\gamma = 1$ and $r = 2$.
Using the energy estimate directy in the state equation~\eqref{equ:examplestate}, we obtain the nonlinearity estimate~\eqref{hyp:thexnonlin} with $\beta = 2$ and $r = 2$.
We introduce an observation function $\obsf:E \to R^d$ and impose the condition
\begin{enumerate}
\setcounter{enumi}{\value{itemfreeze}}
\item The function $\obsf$ satisfies Hypothesis~\ref{hyp:hypothesisI}\eqref{hyp:thexhoel} for the stochastic running costs $\psi$ with $\alpha = 0$ and Hypothesis~\ref{hyp:hypothesisII}.
Further, the derivative of $\obsf$ with respect to $x$ is bounded.
\setcounter{itemfreeze}{\value{enumi}}
\end{enumerate}
For the costs, we use the functional~\eqref{equ:exampleerror}, where 
\begin{enumerate}
\setcounter{enumi}{\value{itemfreeze}}
\item \label{hyp:exmplsigma}
$R, S$ are continuously differentiable matrix valued functions, and $S(t) \geq s \mathbbm{1}$ for some $s > 0$.
\setcounter{itemfreeze}{\value{enumi}}
\end{enumerate}
This implies
\[
\phi(t, x, u) = \frac{1}{2}\trp{\obsf(t, x)}R(t)\obsf(t, x)
+ \frac{1}{2} \trp{u}S(t)u.
\] 
The cost functional corresponds to a minimum energy estimation or weakly constrained 4d--VAR in the geosciences.
By our choice of $\phi$, we can conclude that Hypothesis~\ref{hyp:hypothesisI}\eqref{hyp:thexconvexity} is satisfied with $r = 2, \delta = 0$.
Further
\[
\psi(t, x) = -\trp{\obsf(t, x)}R(t)
\] 
and hence $\psi$ satisfies Hypothesis~\ref{hyp:hypothesisI}\eqref{hyp:thexhoel} with $\kappa \leq 1$ and $\alpha = 0$, and we obtain that
$\gamma(\alpha + \kappa - \frac{1}{2}) + \frac{\beta}{2} \leq \frac{3}{2}$ which is smaller than $r = 2$.
Theorem~\ref{thm:existence} is therefore in force and we obtain that there is an optimal solution to this data assimilation problem for every observation path, and every optimal solution will satisfy $\norm{u}_2 < \infty$.
With regards to the maximum principle, we note that Hypothesis~\ref{hyp:hypothesisII}\eqref{hyp:thmaxdetcost} is satisfied and $\phi$ is strongly convex in $u$.
Further $\DD_2 \phi$ does not depend on $u$ and is continuous in $(t, x)$, whence Hypothesis~\ref{hyp:hypothesisII}\eqref{hyp:thmaxinteg} is satisfied.
Therefore, the maximum principle~\ref{thm:maximumprinciple} applies, and so does Theorem~\ref{thm:hamiltoniansystem}.
The Hamiltonian equations~\eqref{equ:hamiltoniansystem} in the present case read as 
\[
\dot{x} = f(t, x(s)) + g(s, x(s)) u(s)
\]
and (written in coordinates)
\[
\begin{split}
\lambda_m 
& = \sum_j \int_t^T \lambda_j(s) 
\{\DD_2 f(s, x(s)) + \DD_2 g(s, x(s)) u(s)\}_{jm} \: \dd s \\
& \quad + \sum_{jk} \int_t^T \partial_m \obsf_j (s, x(s))
    R_{jk}(s) \{\obsf_k(s, x(s)) \: \dd s - \dd \eta_k(s)\}
\end{split}
\]
with $u(t) = - S^{-1} \trp{g(t, x(t))} \lambda(t)$ and  $x(0) = \xi$.
In this case, it follows directly that the control is continuous and has $q$--variation for any $q > 2$.
The conditions of Theorem~\ref{thm:uniqueness} will be investigated in a future paper, but we conjecture that condition~(b) is satisfied if for example $\obsf$ is linear in $x$ and $f, g$ have continuous second derivatives with respect to $x$.
In this important special case, we obtain the conclusions of Theorem~\ref{thm:uniqueness}; in particular, optimal controls are unique if the value function has a derivative at $\xi$.
As a final note, we discuss the variational problems encountered in~\cite{zeitouni_existence_1988} in connection with the maximum aposteriori (MAP) estimator of trajectories of diffusion processes. 
The conditions (a, b, c) on the state equation~\ref{equ:examplestate} are strengthened to
\begin{enumerate}
\renewcommand{\labelenumi}{(\theenumi')}
\item $f_1$ has bounded and continuous derivatives up to second order with respect to $x$.
\item same as (b).
\item $g$ does not depend on $t$, and $g \trp{g}$ has bounded and continuous derivatives up to third order with respect to $x$.
Further, $0 < c_1 \mathbbm{1} \leq g \trp{g} \leq c_2 \mathbbm{1}$.
\end{enumerate}
Under these conditions, $\Gamma := (g \trp{g})^{-1}$ defines a Riemannian metric, and we write $\sigma$ and $\div$ for the scalar curvature and the divergence, respectively, associated with this metric.
The conditions on $\psi$ and $R$ remain the same. 
In~\cite{zeitouni_existence_1988} it is shown that under certain conditions (which we do not verify here) a MAP estimator for trajectories of the diffusion
\[
\dd x(t) = \tilde{f}(t, x(t)) \dd t + g(x(t)) \: \dd B(t)
\]
with respect to observations of the form
\[
\dd \eta(t) = h(t, x(t)) \dd t  + \rho_t \: \dd B'(t)
\]
is a solution of Problem~VAR with $\psi$
\[
\begin{split}
\psi(t, x) & = -\trp{h(t, x)} R(t), \\
R(t) & = (\rho(t) \trp{\rho(t)})^{-1}, \\
\phi(t, x, u) & = \frac{1}{2} \trp{h(t, x)} R(t) h(t, x)
    + \frac{1}{2} \trp{u} \Gamma(x) u 
    - \div f(t, x)
    + \frac{1}{6} \sigma(x),
\end{split}
\]
and $\tilde{f}$ is related to $f$ through an It\^{o}--Stratonovi\v{c} conversion (see~\cite{zeitouni_existence_1988} for details).
The new terms appearing in the cost function are $\sigma$, $\div f_1$ and $\div f_2$.
The first two are bounded with bounded derivatives, while the third is linear in $x$.
Hence Hypothesis~\ref{hyp:hypothesisII} is again satisfied with $\alpha, \beta, \gamma$ the same constants as before, only $\delta = 1$ now instead of zero as before.
We can thus draw the same conclusions as for the minimum energy estimator, in particular we can conclude the existence of global minimisers.
Note that this has been concluded in~\cite{zeitouni_existence_1988} already, but under much stronger assumptions.
In particular, the boundedness of $\div f$ is needed for that proof to work, meaning that state equations with a quadratic nonlinearity as considered here are not covered.
%
%

\begin{thebibliography}{10}

\bibitem{bardi_optimal_control_2009}
Martino Bardi and Italo Capuzzo-Dolcetta.
\newblock {\em Optimal control and viscosity solutions of
  {H}amilton-{J}acobi-{B}ellman equations}.
\newblock Systems \& Control: Foundations \& Applications. Birkh\"auser Boston,
  Inc., Boston, MA, 1997.
\newblock With appendices by Maurizio Falcone and Pierpaolo Soravia.

\bibitem{clarke_functional_analysis_2013}
Francis Clarke.
\newblock {\em Functional analysis, calculus of variations and optimal
  control}, volume 264 of {\em Graduate Texts in Mathematics}.
\newblock Springer, London, 2013.

\bibitem{derber89}
J.C. Derber.
\newblock A variational continuous assimilation technique.
\newblock {\em Monthly Weather Review}, 117(11):2437--2446, 1989.

\bibitem{evensen_enkf_2007}
Geir Evensen.
\newblock {\em Data Assimilation. The Ensemble Kalman Filter}.
\newblock Springer-Verlag, New York, 2007.

\bibitem{fleming_controlled_2006}
Wendell~H. Fleming and H.~Mete Soner.
\newblock {\em Controlled {M}arkov processes and viscosity solutions},
  volume~25 of {\em Stochastic Modelling and Applied Probability}.
\newblock Springer, New York, second edition, 2006.

\bibitem{frankowska_singularities_value_function_2005}
H.~Frankowska and A.~Ochal.
\newblock On singularities of value function for bolza optimal control problem.
\newblock {\em Journal of Mathematical Analysis and Applications}, 306(2):714
  -- 729, 2005.

\bibitem{friz_rough_paths_2009}
Peter~K. Friz and Nicolas~B. Victoir.
\newblock {\em Multidimensional stochastic processes as rough paths}, volume
  120 of {\em Cambridge Studies in Advanced Mathematics}.
\newblock Cambridge University Press, Cambridge, 2010.
\newblock Theory and applications.

\bibitem{hijab_asymptotic_bayesian_1984}
Omar Hijab.
\newblock Asymptotic bayesian estimation of a first order equation with small
  diffusion.
\newblock {\em Annals of Probability}, 12(3):890--902, 1984.

\bibitem{JAZ}
Andrew~H. Jazwinski.
\newblock {\em Stochastic Processes and Filtering Theory}, volume~64 of {\em
  Mathematics in Science and Engineering}.
\newblock Academic Press, 1970.

\bibitem{kalnay01}
Eugenia Kalnay.
\newblock {\em Atmospheric Modeling, Data Assimilation and Predictability}.
\newblock Cambridge University Press, first edition, 2001.

\bibitem{krener_minimum_energy_2003}
Arthur Krener.
\newblock The convergence of the minimum energy estimator.
\newblock In Wei Kang, Carlos Borges, and Mingqing Xiao, editors, {\em New
  Trends in Nonlinear Dynamics and Control and their Applications}, volume 295
  of {\em Lecture Notes in Control and Information Sciences}, pages 187--208.
  Springer Berlin / Heidelberg, 2003.

\bibitem{mortensen_likelihood_filtering_1968}
R.~E. Mortensen.
\newblock Maximum-likelihood recursive nonlinear filtering.
\newblock {\em Journal of Optimization Theory and Applications}, 2:386--394,
  1968.

\bibitem{nakamura_inverse_modeling_2015}
Gen Nakamura and Roland Potthast.
\newblock {\em Inverse Modeling}.
\newblock 2053-2563. IOP Publishing, 2015.

\bibitem{rogers_least_action_2013}
L.~C.~G. Rogers.
\newblock Least-action filtering.
\newblock {\em ArXiv e-prints}, Jan 2013.

\bibitem{sage68}
Andrew Sage.
\newblock {\em Optimum {S}ystems {C}ontrol}.
\newblock Englewood Cliffs, N. J.: Prentice-Hall, 1968.

\bibitem{sontag98}
Eduardo~D. Sontag.
\newblock {\em Mathematical Control Theory. Deterministic Finite-Dimensional
  Systems}, volume~6 of {\em Texts in Applied Mathematics}.
\newblock Springer-Verlag, New York, second edition, 1998.

\bibitem{temam_infinite_dynamics_1997}
Roger Temam.
\newblock {\em Infinite-dimensional dynamical systems in mechanics and
  physics}, volume~68 of {\em Applied Mathematical Sciences}.
\newblock Springer-Verlag, New York, second edition, 1997.

\bibitem{tremolet06}
Y~Tremolet.
\newblock Accounting for an imperfect model in 4d-var.
\newblock {\em Quarterly Journal of the Royal Meteorological Society},
  132(621):2483--2504, 2006.

\bibitem{yong_stochastic_controls_1999}
Jiongmin Yong and Xun~Yu Zhou.
\newblock {\em Stochastic controls, Hamiltonian systems, and HJB equations},
  volume~43 of {\em Applications of Mathematics}.
\newblock Springer-Verlag, New York, 1999.

\bibitem{zeitouni_maximum_1987}
O.~Zeitouni and A.~Dembo.
\newblock A maximum a posteriori estimator for trajectories of diffusion
  processes.
\newblock {\em Stochastics}, 20(3):221, 1987.

\bibitem{zeitouni_existence_1988}
O.~Zeitouni and A.~Dembo.
\newblock An existence theorem and some properties of maximum a posteriori
  estimators of trajectories of diffusions.
\newblock {\em Stochastics}, 23(2):197, 1988.

\bibitem{zeitouni_change_of_variables_1990}
O.~Zeitouni and A.~Dembo.
\newblock A change of variables formula for {S}tratonovich integrals and
  existence of solutions for two-point stochastic boundary value problems.
\newblock {\em Probab. Theory Related Fields}, 84(3):411--425, 1990.

\end{thebibliography}

%
%
\end{document}